\newtheorem{thm}{Theorem}[section]
\newtheorem{proposition}[thm]{Proposition}
\newtheorem{lemma}[thm]{Lemma}
\newtheorem{theorem}[thm]{Theorem}
\DeclarePairedDelimiter\norm\lVert\rVert
\newcommand\nularg{{}\cdot{}}
\DeclareMathOperator\per{per}
\newcommand\dd{\mathrm{d}}
\newcommand{\Ast}{\scalebox{2}{$\ast$}}
\renewenvironment{abstract}{%
	\vspace{6pt}%
	\begin{center}%
		\begin{minipage}{320pt}%
			\small%
			\begin{center}%
				\textbf{Abstract}%
			\end{center}%
		}{\end{minipage}\end{center}}
\newcommand{\keywords}[1]{%
	\begin{center}%
		\begin{minipage}{320pt}%
			\textbf{Keywords:}~{#1}
		\end{minipage}%
	\end{center}%
}
\newcommand{\msc}[2]{%
	\begin{center}%
		\begin{minipage}{320pt}%
			\medskip\small%
			2010~Mathematics~Subject~Classification:~Primary \uppercase{#1} \\%
			\phantom{2010~Mathematics~Subject~Classification:~}Secundary \uppercase{#2}%
		\end{minipage}%
	\end{center}%
}
\begin{document}
	\title{Pair correlation functions and limiting distributions of	iterated cluster point processes}
	\author{Jesper M{\o}ller and Andreas D. Christoffersen}
	\affil{Department of Mathematical Sciences, Aalborg University}
	\date{}
	\maketitle
	
\begin{abstract}
 	We consider a Markov chain of point processes such that each state is a superposition of an independent cluster process with the previous state as its centre process together with some independent noise process and a thinned version of the previous state. 
	The model extends earlier work by Felsenstein and Shimatani describing a reproducing population. We discuss when closed term expressions of the first and second order moments are available for a given state.	In a special case it is known that the pair correlation function for these type of point processes converges as the Markov chain progresses, but it has not been shown whether the Markov chain has an equilibrium distribution with this, particular, pair correlation function and how it may be constructed. Assuming the same reproducing system, we construct an equilibrium distribution by a coupling argument.
\end{abstract}

\keywords{Coupling; equilibrium; independent clustering; Markov chain; pair correlation function; reproducing population; weighted determinantal and permanental point processes.}

\msc{60G55; 60J20}{60D05; 62M30} 


\section{Introduction}
This paper deals with a discrete time Markov chain of point processes
$G_0,G_1,\ldots$ in the $d$-dimensional Euclidean space $\mathbb R^d$,
where the chain describes a reproducing population and we refer to
$G_n$ as the $n$th generation (of points).  We make the following
assumptions.  Any point process considered in this paper will be
viewed as a random subsets of $\mathbb R^d$ which is almost surely
locally finite, that is, the point process has almost surely a finite
number of points within any bounded subset of $\mathbb R^d$ (for
measure theoretical details, see e.g.\ \cite{daley:vere-jones:03} or
\cite{MW2004}). Recall that a point process $X\subset\mathbb R^d$ is
stationary if its distribution is invariant under translations in
$\mathbb R^d$, and then its intensity $\rho_X\in[0,\infty]$ is the
mean number of points in $X$ falling in any Borel subset of
$\mathbb R^d$ of unit volume.
Now, for generation 0, $G_0$ is stationary with intensity
$\rho_{G_0}\in(0,\infty)$. Further, for generation $n=1,2,\dots,$
conditional on the previous generations $G_0,\ldots,G_{n-1}$, we
obtain $G_n$ by four basic operations for point processes:
\begin{enumerate}
	\item[(a)] Independent clustering: To each point $x\in G_{n-1}$ is
	associated a (non-centred) cluster $Y_{n,x}\subset\mathbb
	R^d$. These clusters are independent identically distributed (IID)
	finite point processes and they are independent of
	$G_0,\ldots,G_{n-1}$. The cardinality of $Y_{n,x}$ has finite mean
	$\beta_n$ and finite variance $\nu_n$ and is independent of the
	points in $Y_{n,x}$ which are IID, with each point following a
	probability density function (PDF) $f_n$.  We refer to $x+Y_{n,x}$
	(the translation of $Y_{n,x}$ by $x$) as the offspring/children
	process generated by the ancestor/parent $x$, and we let
	\begin{equation}\label{e:DefYn}
		Y_n=\bigcup_{x\in G_{n-1}}(x+Y_{n,x})
	\end{equation}
	be the independent cluster process given by the superposition of all
	offspring processes generated by the points in the previous
	generation $G_{n-1}$.
	\item[(b)] Independent thinning: For all $y\in\mathbb R^d$, let
	$B_{n,y}$ be IID Bernoulli variables which are independent of $Y_n$,
	$G_0,\ldots,G_{n-1}$, and all previously generated Bernoulli
	variables. Let $p_n=\mathrm P(B_{n,y}=1)$. For all $x\in G_{n-1}$,
	let
	\begin{equation*}
		W_{n,x}=\{y\in x+Y_{n,x}:B_{n,y}=1\}	
	\end{equation*}
	be the independent $p_n$-thinned point process of $x+Y_{n,x}$, and
	let
	\begin{equation}\label{e:DefWn}
		W_n=\bigcup_{x\in G_{n-1}}W_{n,x}
	\end{equation}
	be the independent $p_n$-thinned point process of $Y_n$. Note that with probability one, $W_n\cap G_{n-1}=\emptyset$, since by assumption on the cluster points the origin is not contained in $Y_{n,x}$.  
	\item[(c)] Independent retention: For all $x\in \mathbb{R}^d$, let $Q_{n,x}$ be IID Bernoulli variables which are independent of $Y_n$,
	$G_0,\ldots,G_{n-1}$, and all previously generated Bernoulli
	variables. Let $q_n=P(Q_{n,x} = 1)$ and  
	let
		\begin{equation*}
			G_{n-1}^{\rm thin} = \left\{x\in G_{n-1} : Q_{n,x} = 1\right\}
		\end{equation*}
		be the independent $q_n$-thinned point process of $G_{n-1}$. 
	\item[(d)] Independent noise: Let $Z_n\subset\mathbb R^d$ be a
	stationary point process with finite intensity $\rho_{Z_n}$ and 
	independent of $W_n$, $G_0,\ldots,G_{n-1}$, and $G_{n-1}^{\rm thin}$. Finally, let
	\begin{equation}\label{e:DefXn}
	G_n=W_n\cup G_{n-1}^{\rm thin} \cup Z_n
	\end{equation}
	where we interpret $Z_n$ as noise. For ease of presentation we assume with probability one that $W_n\cup G_{n-1}^{\rm thin}$ and $Z_n$ are disjoint. Thus $ W_n$, $ G_{n-1}^{\rm thin}$, and $Z_n$ are pairwise disjoint almost surely.
\end{enumerate}
When we later interpret our results, for any point $x\in G_{n-1}^{\rm thin}$, since $x\in G_{n-1}\cap G_n$, we consider $x$ 
  both as its own ancestor and its own child. 

Our model is an extension of the model in Shimatani's paper \cite{shimatani:10}, which
in turn is an extension of Mal{\'e}cot's model studied in
\cite{felsenstein:75} (we return to this in
Section~\ref{s:assumptionsliterature}, item (vii) and (viii)).  In
particular, our extension allows us to model cluster centres
exhibiting clustering or regularity, points from previous generations can be retaining, and the noise processes
can also exhibit clustering or regularity (i.e., they are not assumed to be Poisson processes). 
 For statistical applications, we have in
mind that $G_n$ may be observable (at least for some values of
$n\ge 1$) whilst $G_0$ and the cluster, thinning,  and superpositioning
procedures in items (a)--(b) and (d) are unobservable.  Our model may be of
relevance for applications in population genetics and community
ecology (see \cite{shimatani:10} and the references therein), for
analyzing tropical rain forest point pattern data with multiple scales
of clustering (see \cite{wiegandetal:07}), and for modelling proteins
with multiple noisy appearances in PhotoActivated Localization
Microscopy (PALM) (see \cite{Andersen:etal:17}). However, we leave it
for other work to study the statistical applications of our model and
results.

The paper is organized as follows. A discussion of the assumptions in
items (a)--(d) and the related literature is given in
Section~\ref{s:assumptionsliterature}. Section~\ref{s:moments} focuses
on the first and second order moment properties of $G_n$, that is, its
intensity and pair correlation function (PCF); we extend model cases and results in Shimatani's paper
\cite{shimatani:10} and show that tractable model cases for the PCF of
$G_0$ 
are
meaningful in terms of Poisson and other point processes, including weighted permanental and weighted
determinantal point processes (which was not observed in
\cite{shimatani:10}). Section~\ref{s:samesystem} discusses limiting
cases of the PCF of $G_n$ as $n\rightarrow\infty$ when we have the
same reproduction system and under weaker conditions than in
\cite{shimatani:10}.  In particular, when natural conditions are
satisfied, we establish ergodicity of the Markov chain by using a
coupling construction and by giving a constructive description of the
chain's unique invariant distribution when extending the Markov chain
backwards in time.
Finally, Appendix~\ref{a:1} provides background knowledge on weighted
permanental and determinantal point processes, 
Appendix~\ref{a:2}
verifies some technical details, 
and Appendix~\ref{a:3} specifies an
algorithm for approximate simulation of the Markov chain's invariant
distribution.

\section{Assumptions and related work}\label{s:assumptionsliterature}

Items (i)--(iv) below comment on the model assumptions in items
(a)--(d).
\begin{enumerate}
	\item[(i)] The process $Y_n$ is a stationary independent cluster
	process \cite{daley:vere-jones:03} and we have the following
	special cases: If $G_{n-1}$ is a stationary Poisson process, $Y_n$
	is a Neyman-Scott process \cite{neyman:scott:58}; if in addition
	$\#Y_{n,x}$ follows a Poisson distribution, then $\beta_n=\nu_n$ and
	$Y_n$ is a shot-noise Cox process (SNCP; \cite{moeller:02}) driven
	by
	\begin{equation}\label{e:SNCP}
		\Lambda_n(x)=\beta_n\sum_{y\in G_{n-1}}f_n(x-y),\qquad x\in\mathbb R^d. 
	\end{equation} 
	This is a (modified) Thomas process \cite{thomas:49} if $f_n$ is
	the density of $d$ IID zero-mean normally distributed variates with
	variance $\sigma_n^2$ -- we denote this distribution by
	$N_d(\sigma_n^2)$ -- and it is a Mat{\'e}rn cluster process
	\cite{matern:60,matern:86} if instead $f_n$ is a uniform density
	of a $d$-dimensional ball with centre at the origin.  However, in
	many applications a Poisson centre process is not appropriate.  For
	instance, Van Lieshout \& Baddeley
	(2002)\nocite{lieshout:baddeley:01} considered a repulsive Markov
	point process model for the centre process, whereby it is easier to
	identify the clusters than under a Poisson centre process.
	
	\item[(ii)] When $\beta_n\le\nu_n$, we may consider $Y_n$ as a stationary generalised shot-noise Cox process
	(GSNCP; see \cite{moeller:torrisi:2004}). In this model
	\eqref{e:SNCP} is extended to the case where $G_{n-1}$ is a general
	stationary point process and $Y_n$ is a Cox process driven
	by 
	\begin{equation*}\label{e:GSNCP}
		\Lambda_n(x)=\sum_{y\in G_{n-1}}\gamma_y
		k_n[\{(x-y)/b_y\}]/b_y^d,\qquad x\in\mathbb R^d, 
	\end{equation*}
	where $k_n$ is a PDF on $\mathbb R^d$, the $\gamma_y$ and the $b_y$
	for all $y\in G_{n-1}$ are independent positive random variables
	which are independent of $G_{n-1}$, and the $\gamma_y$ are
	identically distributed with mean $\beta_n$ and variance
	$\nu_n-\beta_n$ (as $\# Y_{n,x}$ has mean $\beta_n$ and variance
	$\nu_n=\mathrm E\{\mathrm{var}(\#
	Y_{n,x}|\gamma_y)\}+\mathrm{var}\{\mathrm E(\#
	Y_{n,x}|\gamma_y)\}=\beta_n+\mathrm{var}(\gamma_n)$). Further, $b_y$
	has an interpretation as a random band-width and
	\begin{equation*}
		f_n(x)=\mathrm E\left\{\frac{k_n(x/b_y)}{b_y^d}\right\}.
	\end{equation*}
	The general results for the
	intensity and PCF of $G_n$ in Section~\ref{s:moments} will be
	unchanged whether we consider this stationary GSNCP or the more
	general case in item (a)
	
	\item[(iii)] Clearly, there is no noise ($Z_n$ is empty with
	probability one) if $\rho_{Z_n}=0$. The case $\rho_{Z_n}>0$ may be
	relevant when not all points in a generation can be described as
	resulting from independent clustering and thinning as in (a)--(c).  Note that in
	item (d) we could without loss of generality assume
	$Z_1, Z_2, \ldots$ are independent.
	Further, we introduce the thinning of $Y_n$ in item (b) only for
	modelling purposes and for comparison with \cite{shimatani:10}; from a mathematical point of view this thinning
	could be omitted if in item (a) we replace each cluster $Y_{n,x}$
	by what happens after the independent thinning: Namely that
	independent thinned clusters $Y_{n,x}^{\mathrm{th}}$ appear so that
	$\# Y_{n,x}^{\mathrm{th}}$ has mean
	$\beta_n^{\mathrm{th}}=\beta_np_n$ and variance
	$\nu_n^{\mathrm{th}}=\beta_np_n-\beta_n p_n^2+\nu_np_n^2$ and is
	independent of the points in $Y_{n,x}^{\mathrm{th}}$ which are IID
	with PDF $f_n$, whereby $W_n$ and
	$Y_n^{\mathrm{th}}:=\cup_{x\in G_{n-1}}(x+Y_{n,x}^{\mathrm{th}})$
	are identically distributed.
	
	\item[(iv)] Assuming for $n=1,2,\ldots$ no thinning of $Y_n$ ($p_n=1$), an equivalent description
	of items (a) and (c)--(d) is given in terms of the Voronoi tessellation
	generated by $G_{n-1}$: For $x\in G_{n-1}$, let $C(x|G_{n-1})$ be
	the Voronoi cell associated to $x$ and consisting of all points in
	$\mathbb R^d$ which are at least as close to $x$ than to any
	other point in $G_{n-1}$ (with respect to usual distance in
	$\mathbb R^d$). With probability one, since $G_{n-1}$ is stationary
	and non-empty, each Voronoi cell is bounded and hence its volume is
	finite (see e.g. \cite{moeller:89a,moeller:94}). Thus we can set
	\begin{equation*}
		G_n=\bigcup_{x\in G_{n-1}}(x+G_{n,x})\bigcup G_{n-1}^{\rm thin}
	\end{equation*}
	where conditional on $G_{n-1}$ and for all $x\in G_{n-1}$, the
	$G_{n,x}$ are independent of $G_{n-1}^{\rm thin}$ and they are IID finite point processes with a distribution as
	follows: $\#G_{n,x}$ has mean $\beta_n+|C(x|G_{n-1})|\rho_{Z_n}$,
	variance $\nu_n+|C(x|G_{n-1})|\rho_{Z_n}$, and is independent of the
	points in $G_{n,x}$, where $|\cdot|$ denotes volume. The points in $G_{n,x}$ are i.i.d., each following a
	mixture distribution so that with probability
	$\beta_n/\{\beta_n+|C(x|G_{n-1})|\rho_{Z_n}\}$ the PDF is $f_n$ and
	else it is a uniform distribution on $C(x|G_{n-1})$.
\end{enumerate}

In items (v)--(vi) below we discuss earlier work on the model for
$G_0,G_1,\ldots$, where $G_0$ is a stationary Poisson process, all
$G_n=Y_n$ for $n\ge1$ (i.e., no thinning, no retention, and no noise), $f_n=f$ and
$\beta_n=\beta$ do not depend on $n\ge1$. We may refer to this as a
replicated SNCP. Frequently in the literature, a so-called replicated
Thomas process is considered, that is, $f\sim N_d(\sigma^2)$.

\begin{enumerate}
	\item[(v)] Apparently this replicated SNCP was originally studied by
	Mal{\'e}cot, see the discussion and references in Felsenstein's paper
	\cite{felsenstein:75} where the following three conditions are
	stated: 
	\begin{itemize}[]
		\item[(I)] ''individuals are distributed randomly on the line with
		equal expected density everywhere''; 
		\item[(II)] ''each individual reproduces
		independently, the number of offspring being drawn from a Poisson
		distribution with a mean of one''; and
		\item[(III)] ''each offspring migrates
		independently, the displacements being drawn from some distribution
		m(x), which we will take to be a normal distribution.''
	\end{itemize}
   (In our notation, $d=1$, $\beta=1$, and $f\sim N_1(\sigma^2)$, but
	\cite{felsenstein:75} considered also more general offspring
	densities $f$ and the cases $d=2,3$.)  \cite{felsenstein:75} noted
	that ``(I) is incompatible with (II)--(III)'' because
	$G_1,G_2,\ldots$ are not stationary Poisson processes and ``a model
	embodying (II) and (III) will lead to the formation of larger and
	larger clumps of individuals separated by greater and greater
	distances'', and then he concluded ``This model is therefore
	biologically irrelevant''.
	
	\item[(vi)] Kingman in \cite{Kingman:77} considered the case where $\beta$ is
	replaced by a non-negative function $b$ which is allowed to depend
	on the cluster centre $x$ and the previous generation,
	so a cluster with centre $x$ is a Poisson process with intensity
	function $b(x,G_{n-1})f(\nularg-x)$; e.g., as in the Voronoi case
	discussed in item (iv), $b(x,G_{n-1})$ may depend on $G_{n-1}$ in a
	neighbourhood of $x$. Then $G_n$ is a Cox process: $G_n$ conditional
	on $G_{n-1}$ is a Poisson process with intensity function
	\begin{equation}\label{e:Kingmanint}
		\Lambda_n(x)=\sum_{y\in G_{n-1}}b(y,G_{n-1})f(x-y),\qquad x\in\mathbb R^d.
	\end{equation}
	In this setting \cite{Kingman:77} verified that it is impossible for
	$G_n$ to be a stationary Poisson process, however, replacing
	$f(x-y)$ in \eqref{e:Kingmanint} by a more general density which may
	depend on $G_{n-1}-x$, \cite{Kingman:77} noticed that it is possible
	for $G_n$ to be a stationary Poisson process. A trivial example is
	the Voronoi case in item (iv) when $G_n=Z_n$ for $n\ge1$.
\end{enumerate}

Recently, Shimantani in \cite{shimatani:10} considered first the case of items
(a)--(b) and no noise, when $d=2$ and there is the same reproduction
system so that $f_n=f$, $\beta_n=\beta>0$, $\nu_n=\nu$, and
$p_n=p\in(0,1]$ do not depend on $n\ge1$.

\begin{enumerate}
	\item[(vii)] In particular, \cite{shimatani:10} considered the case
	$f\sim N_2(\sigma^2)$ and when $\beta p=1$ or equivalently when the
	intensities $\rho_{G_0}=\rho_{G_1}=\ldots$ are invariant over
	generations, and then he showed that as $n\rightarrow\infty$, the
	PCF for $G_n$ diverges. It
	follows from item (iii) that the model is equivalent to a replicated
	Neyman-Scott process; this becomes a replicated Thomas process when
	each cluster size is Poisson distributed, and hence the 
	 result in \cite{shimatani:10} agrees with the results in
	\cite{felsenstein:75} and \cite{Kingman:77}. Note that
	\cite{shimatani:10} implicitly assumed that a cluster can have more
	than one point. Otherwise the PCF of $G_n$ becomes equal to 1; we
	discuss this rather trivial case again in Section~\ref{s:PCFResults}
	and \ref{s:samesystem}; see also Section~3 in \cite{Kingman:77}.
\end{enumerate}

Then, Shimantani in \cite{shimatani:10} extended the model by including noise as in
item (d) and by making the following assumptions: The noise processes
$Z_n$ are stationary Poisson processes, satisfying
$0<\rho_{Z_1}=\rho_{Z_2}=\ldots$ and $\rho_{G_0}=\rho_{G_1}=\ldots,$
meaning that $\beta p\le 1$. As there is no noise if $\beta p=1$ it is
also assumed that $\beta p<1$.

\begin{enumerate}
	\item[(viii)] Then \cite{shimatani:10} showed that the PCF of $G_n$
	converges uniformly as $n\rightarrow\infty$ and he argued that this
	limiting case may be ``biologically valid''
	\cite[Section~2.4]{shimatani:10}. However, we address some points arising from \cite{shimatani:10}.
	\begin{itemize}
		\item He did not show that there exists an underlying point
		process having this limiting case as its PCF, although he claimed
		that ``this modified replicated Neyman-Scott process reaches an
		equilibrium state''. In Section~\ref{s:samesystem}, for our more
		general model, we prove the existence of such an underlying point
		process.
		
		\item When $G_0$ is not a stationary Poisson process but its PCF is
		of a particular form (which we specify later in connection to
		\eqref{e:PCFX}), he did not argue that there exists an underlying
		point process and what it could be.  In Section~\ref{s:moments},
		we verify this existence under our more general model.
	\end{itemize}
\end{enumerate}

Finally, we remark on a few related cases.

\begin{enumerate}
	\item[(ix)] Whilst we study the processes $G_n$ for all
	$n=1,2,\ldots$, often in the spatial point process literature the
	focus is on either $G_1$ or $G_2$, assuming $p_n=1$ and
	$\rho_{Z_n}=0$ for $n=1$ or $n=1,2$,
	respectively. \cite{wiegandetal:07} studied this in the special case
	of a double Thomas cluster process $G_2$ when $d=2$, i.e., when
	$G_0$ is a stationary Poisson process, \eqref{e:SNCP} holds for both
	$G_1=Y_1$ and $G_2=Y_2$, and $f_n\sim N_2(\sigma_n^2)$ for $n=1,2$;
	see also \cite{Andersen:etal:17} for more general functions
	$f_n$. Moreover, \cite{wiegandetal:07} extended the double Thomas
	process to the case where $\rho_{Z_1}=0$ and $\rho_{Z_2}>0$; this
	type of model is also considered in \cite{Andersen:etal:17}.  In any
	case, our general results for intensities and PCFs in
	Section~\ref{s:moments} will cover all these cases.
	
	\item[(x)] If for each generation we assume no thinning ($p_1=p_2=\ldots=1$), no noise ($\rho_{Z_1}=\rho_{Z_2}=\ldots=0$), no retention ($q_1=q_2=\ldots=0$) as well as $\beta_1=\beta_2=\ldots$ and
	$f_1=f_2=\ldots$, then the superposition $\bigcup_{n=0}^\infty G_n$ is
	known as a spatial Hawkes process, see \cite{moeller:torrisi:07} and
	the references therein.
\end{enumerate}

\section{First and second order moment properties}\label{s:moments}

In this section we determine the intensity and the PCF of $G_n$ for
$n=1,2,\ldots,$ under more general assumptions than in Shimatani's paper 
\cite{shimatani:10}. Specifically, points from one generation can be retained in the next generation, the noise is an arbitrary
stationary point process (not necessarily a stationary Poisson process
as in \cite{shimatani:10}), and we do not assume the same reproduction
system.

\subsection{Intensities}
By induction $G_n$ is seen to be stationary for $n=0,1,\ldots$ Its
intensity is determined in the following proposition where for
notational convenience we define $Z_0=G_0$ so that
$\rho_{Z_0}=\rho_{G_0}$.

\begin{proposition}\label{p:intensity}
	For $n=1,2,\ldots,$ we have that $G_n$ is stationary with a positive
	and finite intensity given by
	\begin{align}\label{e:intensityXn}
		\rho_{G_n} 
		= \rho_{G_{n-1}}(\beta_n p_n + q_n) + \rho_{Z_n}
		=\rho_{Z_n} + \sum_{i=0}^{n-1}\rho_{Z_i}\prod_{j=i+1}^n(\beta_jp_j + q_j).
	\end{align}
\end{proposition}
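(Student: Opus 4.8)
The plan is to proceed by induction on $n$, using the superposition structure \eqref{e:DefXn} together with the standard additivity of intensities under superposition of (possibly dependent, but jointly well-defined) point processes. First I would establish the base of the induction: since $G_0$ is stationary with intensity $\rho_{G_0}\in(0,\infty)$ by assumption, and since translation of the whole construction commutes with the four operations in items (a)--(d) (each operation is defined in a translation-equivariant way — clusters are attached by translation, the Bernoulli marks are i.i.d.\ over $\mathbb R^d$, and $Z_n$ is stationary), stationarity of $G_n$ follows from stationarity of $G_{n-1}$; this is the induction claim ``$G_n$ is stationary'' already asserted in the text before the proposition.

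Next, for the intensity I would condition on $G_{n-1}$ and compute $\mathrm E[\#(G_n\cap A)]$ for a Borel set $A$ of unit volume. Because $W_n$, $G_{n-1}^{\rm thin}$, and $Z_n$ are pairwise disjoint almost surely, $\#(G_n\cap A)=\#(W_n\cap A)+\#(G_{n-1}^{\rm thin}\cap A)+\#(Z_n\cap A)$, so the mean splits into three terms. For the thinning term, $\mathrm E[\#(G_{n-1}^{\rm thin}\cap A)\mid G_{n-1}]=q_n\,\#(G_{n-1}\cap A)$ by independence of the retention variables $Q_{n,x}$, giving $q_n\rho_{G_{n-1}}$ after taking expectations. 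For the noise term we get $\rho_{Z_n}$ directly. For $W_n$, I would use the offspring description in item (a) together with the $p_n$-thinning in item (b): conditional on $G_{n-1}$, $W_n=\bigcup_{x\in G_{n-1}}W_{n,x}$ where each $W_{n,x}$ is an independent thinned translated cluster, so $\mathrm E[\#(W_n\cap A)\mid G_{n-1}]=\sum_{x\in G_{n-1}}\mathrm E[\#(W_{n,x}\cap A)]$. Since each cluster point is i.i.d.\ with density $f_n$, each point is thinned with probability $p_n$, and the expected cluster cardinality is $\beta_n$, a short computation (Wald-type, using independence of $\#Y_{n,x}$ from the points) gives $\mathrm E[\#(W_{n,x}\cap(A-x))]=\beta_n p_n\int_{A-x}f_n(u)\,\dd u$; summing over $x\in G_{n-1}$ and taking expectations using stationarity of $G_{n-1}$ yields $\beta_n p_n\rho_{G_{n-1}}\int f_n=\beta_n p_n\rho_{G_{n-1}}$. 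Adding the three contributions gives the first equality $\rho_{G_n}=\rho_{G_{n-1}}(\beta_n p_n+q_n)+\rho_{Z_n}$; positivity is clear since $\rho_{G_{n-1}}>0$, and finiteness follows because $\beta_n,p_n,q_n$ and $\rho_{Z_n}$ are all finite. The second equality is then a routine unrolling of this linear recursion (a discrete variation-of-constants formula), using the convention $Z_0=G_0$, which I would verify by a secondary one-line induction.

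The main obstacle, such as it is, is purely bookkeeping rather than conceptual: one must justify interchanging expectation with the (almost surely locally finite, hence at most countable) sum over $x\in G_{n-1}$, which is legitimate by Tonelli's theorem since all summands are non-negative, and one must be careful that the cluster cardinality $\#Y_{n,x}$ and the i.i.d.\ points of $Y_{n,x}$ are independent so that the expected count in a set factors as (expected cardinality) $\times$ (probability a single point lands there) $\times$ (thinning probability). I would phrase the single-cluster computation cleanly as $\mathrm E\big[\sum_{i=1}^{\#Y_{n,x}} B_{n,x+U_i}\mathbf 1\{x+U_i\in A\}\big]=\beta_n p_n\int_{A-x}f_n$, where the $U_i$ are the i.i.d.\ cluster displacements, and then everything else is the bilinear recursion solve.
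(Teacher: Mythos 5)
Your argument is correct and follows essentially the same route as the paper: induction on $n$, the disjoint decomposition $G_n=W_n\cup G_{n-1}^{\rm thin}\cup Z_n$ giving the recursion $\rho_{G_n}=\rho_{G_{n-1}}(\beta_n p_n+q_n)+\rho_{Z_n}$, and then unrolling it with the convention $Z_0=G_0$. The paper states this only as an immediate consequence of items (a)--(d); your version simply fills in the Campbell/Wald-type bookkeeping that the paper leaves implicit.
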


\begin{proof} 
	Using induction for $n=1,2,\ldots,$ the proposition follows
	immediately from items (a)--(d), where the term
	$\rho_{Z_i}\prod_{j=i+1}^n(\beta_jp_j+q_j)$ is the contribution to the
	intensity caused by the clusters with centres $Z_i$ and after applying the two types of
	independent thinnings.
\end{proof}

\subsection{Pair correlation functions}

\subsubsection{Preliminaries}\label{s:preliminaries}
Recall that a stationary point process $X\subset\mathbb R^d$ with
intensity $\rho_X\in(0,\infty)$ has a translation invariant PCF (pair
correlation function) $(u,v)\to g_X(u-v)$ with
$(u,v)\in\mathbb R^d\times\mathbb R^d$ if for any bounded Borel
function $h: \mathbb R^d\times\mathbb R^d \to [0, \infty)$ with
compact support,
\begin{equation}\label{e:PCF}
	\mathrm{E} \sum_{x_1,x_2 \in X:\,x_1 \neq x_2} h(x_1,x_2) 
	= \rho_X^2\iint h(x_1,x_2)g_X(x_1-x_2)\,\mathrm dx_1\,\mathrm
	dx_2<\infty.
\end{equation}
Equivalently, for any bounded and disjoint Borel sets
$A,B\subset\mathbb R^d$, denoting $N(A)$ the cardinality of $X\cap A$,
the covariance between $N(A)$ and $N(B)$ exists and is given by
\begin{equation*}
	\mathrm{cov}\{N(A),N(B)\}
	=\rho_X^2\int_A\int_B\left\{g_{X}(x_1-x_2)-1\right\}\,\mathrm dx_1\,\mathrm dx_2.
\end{equation*}

Some remarks are in order.  Note that $g_X$ is uniquely determined
except for nullsets with respect to Lebesgue measure on $\mathbb R^d$,
but we ignore such nullsets in the following.  Thus the translation
invariance of the PCF is implied by the stationarity of $X$.  Our
results below are presented in terms of the reduced PCF $g_X-1$ rather than $g_X$, and
$g_X=1$ if $X$ is a Poisson process.
It is convenient when $g_{X}$ is isotropic, meaning that there is a function $g_{X,o}$ so that for all $x\in\mathbb R^d$,  
$g_{X}(x)=g_{X,o}(\norm{x})$ depends only on $x$ through $\|x\|$. With a slight
abuse of terminology, we also refer to $g_X$ and $g_{X,o}$ as PCFs.

For a PDF $h$ on $\mathbb R^d$, let $\tilde h(x)=h(-x)$ and let
\begin{equation}\label{e:convul}
	h*\tilde h(x_1-x_2)=\int h(x_1-y)h(x_2-y)\,\mathrm dy
\end{equation} 
be the convolution of $h$ and $\tilde h$. Note that if $U$ and $V$ are
IID random variables with PDF $h$, then $U-V$ has PDF $h*\tilde h$.
In the following section we consider the case
\begin{equation}\label{e:PCFX}
	g_X - 1 = a~h*\tilde{h}
\end{equation}
for real constants $a$, where $X$ in particular, may refer to the
initial generation process, $G_0$, or the noise process, $Z_n$.  This
corresponds to $X$ being a Poisson process if $a=0$, a point process
with positive association between its points (attractiveness,
clustering, or clumping) if $a>0$, and a point process with negative
association between its points (repulsiveness or regularity) if $a<0$.
In \cite{shimatani:10}, for the initial generation process $G_0$, Shimatani
briefly discussed the special case of \eqref{e:PCFX} when
$h\sim N_2(\tau^2/2)$ (so $h*\tilde{h}\sim N_2(\tau^2)$) whilst the
noise processses are stationary Poisson processes. However, if $a\ne0$
he did not argue if an underlying point process with PCF $g_X$
exists. Indeed, as detailed in Appendix~\ref{a:1}, there exist
$\alpha$-weighted determinantal point processes satisfying
\eqref{e:PCFX} if $\alpha=-1/a$ is a positive integer, and there exist
Cox processes given by $\alpha$-weighted permanental point processes
satisfying \eqref{e:PCFX} if $\alpha=1/a$ is a positive
half-integer.  
Additionally, $h$ needs not to be Gaussian when dealing
with weighted determinantal and permanental point processes; e.g.\ $h$
may be the density of a normal-variance mixture distribution
\cite{Barndorff:etal:82}.  Also generalized shot-noise Cox processes \cite{moeller:torrisi:07} have PCFs of the form \eqref{e:PCFX} with $a>0$. Moreover, \eqref{e:PCFX} holds for
many other cases of point process models for $X$:
If the Fourier transform $\mathcal F\left(g_{X}-1\right)$ is
well-defined and non-negative,
if $h=\tilde{h}$, and if $a:=\int(g_X-1)\in(0,\infty)$, then
\eqref{e:PCFX} holds with
\begin{equation*}
	h  =\mathcal F^{-1}\bigl\{\sqrt{\mathcal
		F(g_X-1)}\bigr\}\big/\sqrt{a}
\end{equation*}
provided this inverse transform is well-defined.
Extensions of \ref{e:PCFX} are discussed in Section~\ref{s:extention}

We will need the following lemma in Section~\ref{s:PCFResultsProof}.
\begin{lemma}\label{l:2}
	Suppose $g_X$ is of the form \eqref{e:PCFX}. Then 
	\begin{equation*}\label{e:01case}
		\iint \{g_X(x_1-x_2)-1\}f(u-x_1)f(v-x_2)\,\mathrm dx_1\,\mathrm dx_2
		= ah*\tilde{h}*f*\tilde f(u-v)
	\end{equation*}
	for any integrable real function
	$f$ defined on $\mathbb R^d$ and for any $u,v\in\mathbb R^d$.
\end{lemma}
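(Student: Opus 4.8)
The plan is to treat this as a straightforward computation with convolutions, substituting the assumed form \eqref{e:PCFX} into the left-hand side and then rearranging the resulting iterated integral. Writing $g_X-1 = a\,h*\tilde h$, the left-hand side becomes
\[
a\iint (h*\tilde h)(x_1-x_2)\,f(u-x_1)\,f(v-x_2)\,\mathrm dx_1\,\mathrm dx_2,
\]
and the goal is to recognize this as $a$ times the four-fold convolution $h*\tilde h*f*\tilde f$ evaluated at $u-v$. First I would recall the elementary fact recorded just before the lemma: if $U,V$ are IID with PDF $h$ then $U-V$ has PDF $h*\tilde h$; more generally, for (not necessarily nonnegative) integrable functions, convolution is associative and commutative, and $\widetilde{h*\tilde h} = h*\tilde h$ since $h*\tilde h$ is symmetric. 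The cleanest route is probably to change variables to $y_1 = u-x_1$, $y_2 = v-x_2$, turning the integral into
\[
a\iint (h*\tilde h)\bigl((u-v)-(y_1-y_2)\bigr)\,f(y_1)\,f(y_2)\,\mathrm dy_1\,\mathrm dy_2,
\]
and then noting that integrating a function $\phi$ against $f(y_1)f(y_2)$ in the combination $\phi\bigl(w-(y_1-y_2)\bigr)$ is exactly $(\phi * f * \tilde f)(w)$ with $w = u-v$, again because $y_1-y_2$ has "density" $f*\tilde f$ in the formal sense. Taking $\phi = h*\tilde h$ and $w = u-v$ gives the claimed identity.

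The only real issue to be careful about is integrability/Fubini, since $f$ is merely assumed integrable and real-valued (not a PDF, possibly signed), so I would want to check that the double integral is absolutely convergent before freely interchanging the order of integration. This follows because $h*\tilde h$ is bounded (being a convolution of the PDF $h$ with the bounded function $\tilde h$ — or, if $h$ is not bounded, one can instead observe that $\|h*\tilde h\|_1 = \|h\|_1^2 = 1 < \infty$ and use Young's inequality at the level of $L^1$ functions), so that writing everything with $|f|$ in place of $f$ gives a finite bound $\|h*\tilde h\|_1\,\|f\|_1^2$ or $\|h*\tilde h\|_\infty\,\|f\|_1^2$. Hence Fubini applies and all the rearrangements above are justified.

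I do not expect any genuine obstacle here; the statement is essentially a bookkeeping identity about convolutions, and the "hard part," such as it is, is merely phrasing the absolute-convergence check cleanly enough that the associativity of convolution can be invoked without fuss. I would present the proof in three short moves: (1) substitute \eqref{e:PCFX}; (2) justify Fubini via the $L^1$/$L^\infty$ bound on $h*\tilde h$; (3) change variables and collapse the iterated integral into $a\,(h*\tilde h*f*\tilde f)(u-v)$, using symmetry of $h*\tilde h$ and associativity of convolution. No appeal to the earlier propositions is needed — only the definition \eqref{e:convul} and the remark immediately following it.
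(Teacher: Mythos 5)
Your proposal is correct and follows essentially the same route as the paper, whose proof is exactly the combination of \eqref{e:convul}, \eqref{e:PCFX}, Fubini's theorem, and the commutativity/associativity of convolution; you merely spell out the change of variables and the absolute-convergence check that the paper leaves implicit. No further comment is needed.
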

\begin{proof}
	Follows from \eqref{e:convul} and \eqref{e:PCFX} using Fubini's
	theorem and the fact that the convolution operation is commutative
	and associative.
\end{proof}

\subsubsection{First main result}\label{s:PCFResults}

This section concerns our first main result, Theorem~\ref{t:main}, which is verified in Section~\eqref{s:PCFResultsProof}.  We
use the following notation.
Define
\begin{equation}\label{e:cndef}
	c_n 
	= \mathrm E\{\#Y_{n,x}(\#Y_{n,x}-1)\}/\beta_n^2
	= (\nu_n+\beta_n^2-\beta_n)/\beta_n^2\qquad\text{if $\beta_n>0$},
\end{equation}
with $c_n=0$ if $\beta_n=0$.  If $\beta_n=\nu_n>0$, as in the case
when $\#Y_{n,x}$ follows a (non-degenerated) Poisson distribution,
then $c_n=1$. The case of overdispersion (underdispersion), that is,
$\nu_n > \beta_n$ ($\nu_n < \beta_n$) corresponds to $c_n>1$
($c_n<1$).
  Denote by $\delta_0$ the Dirac delta function defined on $\mathbb R^d$. Recall that for any integrable real function $h$  defined on $\mathbb{R}^d$, $h*\delta_0=\delta_0*h=h$, and for any $a\in\mathbb R$, $a\delta_0*a\delta_0=a^2\delta_0$, where we understand $0\delta_0$ as 0. Finally, let 
  $\Ast_{i=1}^{n}h_i = h_1*\cdots*h_n$ 
  where each $h_i$ is either of the form $a_i\delta_0$, with $a_i$ a real constant, or it is an integrable real function defined on $\mathbb R^d$.

\begin{theorem}\label{t:main}
	Suppose $g_{G_0}$ and $g_{G_{Z_n}}$ are of the form \eqref{e:PCFX},
	that is, $g_{G_0}-1=af_0*\tilde{f}_0$ and
	$g_{Z_n}-1=b_nf_{Z_n}*\tilde{f}_{Z_n}$ for $n=1,2,\ldots$. Then, for
	all $u\in\mathbb R^d$ and $n=1,2,\ldots$,
		\begin{align}
			g_{G_n}(u)-1
			={}&\biggl(\frac{\rho_{G_0}}{\rho_{G_n}}\biggr)^2af_0*\tilde{f}_0*\overset{n}{\underset{i=1}{\Ast}}\bigg\{\big(\beta_ip_if_i + q_i\delta_0\big)*\big(\beta_ip_i\tilde{f}_i + q_i\delta_0\big)\bigg\}(u) \label{e:main1} \\
			&\begin{aligned}
				&+\sum_{i=1}^n\frac{\rho_{G_{i-1}}}{\rho_{G_n}^2}\bigg\{c_i\big(\beta_ip_i\big)^2f_i*\tilde{f}_i + \beta_ip_iq_i\big(f_i+\tilde f_i\big)\bigg\} \\
				&\hspace{13mm}*\overset{n}{\underset{j=i+1}{\Ast}}\bigg\{\big(\beta_jp_jf_j + q_j\delta_0\big)*\big(\beta_jp_j\tilde{f}_j + q_j\delta_0\big)\bigg\}(u) 
			\end{aligned}\label{e:main2}\\
			&\begin{aligned}
				&+\sum_{i=1}^{n-1}\biggl(\frac{\rho_{Z_i}}{\rho_{G_n}}\biggr)^2 b_if_{Z_i}*\tilde{f}_{Z_i}\\
				&\hspace{13mm}*\overset{n}{\underset{j=i+1}{\Ast}}\bigg\{\big(\beta_jp_jf_j + q_j\delta_0\big)*\big(\beta_jp_j\tilde{f}_j + q_j\delta_0\big)\bigg\}(u)
			\end{aligned} \label{e:main3}\\
			&+\left(\frac{\rho_{Z_n}}{\rho_{G_n}}\right)^2b_nf_{Z_n}*\tilde{f}_{Z_n}(u)\label{e:main4}
		\end{align} 
		where the $(n-i)$-th fold convolution in \eqref{e:main2} is interpreted as 1 if $i=n$ 
		and the sum in \eqref{e:main3} is interpreted as 0 if $n=1$. 
\end{theorem}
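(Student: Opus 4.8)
The plan is to prove Theorem~\ref{t:main} by induction on $n$, tracking how the reduced PCF transforms under the four operations (a)--(d) that produce $G_n$ from $G_{n-1}$. The starting point is a single-step recursion: I would first establish a lemma expressing $g_{G_n}-1$ in terms of $g_{G_{n-1}}-1$, the cluster density $f_n$, the parameters $\beta_n,p_n,q_n,c_n$, and the noise PCF $g_{Z_n}-1$. To derive this recursion, condition on $G_{n-1}$ and use the factorial moment structure of the superposition $G_n = W_n \cup G_{n-1}^{\rm thin} \cup Z_n$. By the almost-sure disjointness assumed in (b)--(d), the second factorial moment measure of $G_n$ decomposes into contributions from ordered pairs of points lying in the same one of the three components plus cross-terms between distinct components. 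Each such contribution is computed via the independence assumptions: pairs within the same thinned cluster $W_{n,x}$ contribute the $c_n$-term (using \eqref{e:cndef} for $\mathrm E\{\#Y_{n,x}(\#Y_{n,x}-1)\}$) convolved with $f_n*\tilde f_n$; pairs in distinct clusters, one in $W_n$ and one in $G_{n-1}^{\rm thin}$, or the $G_{n-1}^{\rm thin}$-$G_{n-1}^{\rm thin}$ pairs contribute terms governed by $g_{G_{n-1}}$ smeared by the appropriate combination of $f_n$, $\tilde f_n$, and $\delta_0$; the $Z_n$ terms contribute $g_{Z_n}-1$ directly. Dividing by $\rho_{G_n}^2$ and using Proposition~\ref{p:intensity} for the intensity, and Lemma~\ref{l:2} to handle the $g_{G_{n-1}}-1$ part (which is itself of the form \eqref{e:PCFX} by the inductive hypothesis), yields the one-step formula.

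Once the one-step recursion is in place, the induction is essentially bookkeeping. For the base case $n=1$, the recursion applied to $g_{G_0}-1 = a f_0*\tilde f_0$ directly produces the four groups \eqref{e:main1}--\eqref{e:main4} with $n=1$ (the sum in \eqref{e:main3} empty, the convolution in \eqref{e:main2} trivial for $i=n=1$). For the inductive step, I would assume the displayed formula holds for $G_{n-1}$ and apply the one-step recursion. The key structural observation is that passing from generation $n-1$ to generation $n$ multiplies each pre-existing term by the factor $(\beta_n p_n f_n + q_n\delta_0)*(\beta_n p_n \tilde f_n + q_n\delta_0)$ and rescales intensities from $\rho_{G_{n-1}}$ to $\rho_{G_n}$ — this is exactly how the running convolution products $\Ast_{j=i+1}^n\{\cdots\}$ grow by one factor, and how the $(\rho_{G_0}/\rho_{G_n})^2$, $(\rho_{Z_i}/\rho_{G_n})^2$, and $\rho_{G_{i-1}}/\rho_{G_n}^2$ prefactors arise. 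The genuinely new contributions at step $n$ — the freshly created within-cluster pairs and within-generation cross pairs, plus the fresh noise $Z_n$ — become the new $i=n$ summand in \eqref{e:main2} and the term \eqref{e:main4}, while the old noise term (previously \eqref{e:main4} for $G_{n-1}$) joins the sum \eqref{e:main3} as the $i=n-1$ summand. One must verify that the two ``mixed'' contributions at step $n$ combine into the single bracket $\{c_i(b_ip_i)^2 f_i*\tilde f_i + \beta_ip_iq_i(f_i+\tilde f_i)\}$ appearing in \eqref{e:main2}; note this forces the typographical reading $(\beta_i p_i)^2$ rather than $(b_i p_i)^2$, consistent with the cluster computation. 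Throughout, the conventions $h*\delta_0 = h$ and $a\delta_0*a\delta_0 = a^2\delta_0$ and $0\delta_0 = 0$ from the paragraph preceding the theorem make the $q_i = 0$ (no retention) and $\beta_i = 0$ degenerate cases automatic.

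The main obstacle is the careful organization of the one-step recursion: correctly enumerating all nine pair-types from the three-component superposition and identifying which depend on $g_{G_{n-1}}$ (and hence feed the inductive hypothesis through Lemma~\ref{l:2}) versus which are genuinely new, all while keeping the $\delta_0$ bookkeeping and the intensity normalizations straight. A secondary technical point is justifying the interchange of expectation and summation / the use of Fubini when expanding the factorial moment measures — this is where one needs the finiteness of $\beta_n$, $\nu_n$, the local finiteness of all processes, and the finiteness already guaranteed in \eqref{e:PCF}; I expect this to be routine given the standing integrability assumptions, and one could relegate the measure-theoretic details to an appendix as the paper does elsewhere. The combinatorial reorganization that turns the output of the recursion back into the four-group form \eqref{e:main1}--\eqref{e:main4} — in particular matching indices in the telescoping convolution products and confirming the empty-product and empty-sum conventions — is the part requiring the most care, but it is mechanical once the one-step formula is correctly stated.
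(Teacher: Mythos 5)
Your proposal is correct and follows essentially the same route as the paper: the paper also proves Theorem~\ref{t:main} by induction from a one-step recursion (its Lemma~\ref{l:1}, derived exactly as you describe by decomposing the pair contributions of the superposition $W_n\cup G_{n-1}^{\rm thin}\cup Z_n$, with the within-cluster pairs producing the $c_n$-term and the $W_n$--$G_{n-1}^{\rm thin}$ cross-term producing the $\beta_n p_n q_n(f_n+\tilde f_n)$ part) combined with Lemma~\ref{l:2} to propagate the form \eqref{e:PCFX}. Your reading of $(\beta_i p_i)^2$ in place of the typographical $(b_i p_i)^2$ in \eqref{e:main2} is also the intended one.
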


The terms in \eqref{e:main1}--\eqref{e:main4} have the following
interpretation when $u\ne0$: The right side of \eqref{e:main1} corresponds to
pairs of $n$-th generation points with different $0$-th generation
ancestors; the $i$-th term in \eqref{e:main2} corresponds to pairs of $n$-th generation points 
when they have a common $(i-1)$-th generation ancestor initiated by $Z_{i-1}$ if $i>1$ or by $G_0$ if $i=1$; the
$i$-th term in \eqref{e:main3} corresponds to pairs of $n$-th
generation points with different $i$-th generation ancestors initiated
by  $Z_i$; and 
\eqref{e:main4}
corresponds to point pairs in $Z_n$.


Later in Section~\ref{s:second main}, our main interest is in the
behaviour of $g_{G_n}$ as $n\rightarrow\infty$ when we have the same
reproduction system, but for the moment, it is worth noticing the
flexibility of our model for $G_1$ and the effect of the choice of its
centre process $G_0$: For simplicity, suppose there is no noise and no retention of points from one generation to the next (i.e., $\rho_{Z_n}=q_n=0$ for $n=1,2,\ldots$), and $G_0$ is
stationary and either a Poisson or an weighted determinantal or
permanental point process with a Gaussian kernel. Specifically, suppose
$d=\nobreak 2$, $G_0$ has intensity $\rho_{G_0}=100$, and using a
notation as in Appendix~\ref{a:1}, the Gaussian kernel has an
auto-correlation function of the form $R(x)=\exp(-\|x/\tau\|^2)$,
where the value of $\tau$ depends on the type of process: For the
$\alpha$-weighted determinantal point process, we consider the most
repulsive case, that is, a determinantal point process ($\alpha=1$)
and $\tau=1/\sqrt{\rho_{G_0}\pi}$ is largest possible to ensure
existence of the process \cite{LMR15}; for the $\alpha$-weighted
permanental point process, $\alpha=1/2$ (the most attractive case when
it is also a Cox process, see Appendix~\ref{a:1}) and $\tau=0.1$ is an
arbitrary value (any positive number can be used).  Note that
$R^2=(\sqrt{\pi}\tau)^2f_0*\tilde{f}_0$ where $f_0\sim N_2(\tau^2/8)$,
which by \eqref{e:pcfwPPP} and \eqref{e:pcfwDPP} mean that
\eqref{e:PCFX} is satisfied with $a=2(\sqrt{\pi}\tau)^2$ and
$a=-(\sqrt{\pi}\tau)^2$ for the weighted permanental and determinantal
point processes, respectively, and $a=0$ in case of the Poisson
process.  Moreover, let the number of points in a cluster be Poisson
distributed with mean $\beta_1=10$, $p_1=1$, and
$f_1\sim N_2(\sigma^2)$, with $\sigma=0.01$.  Then, by
Theorem~\ref{t:main},
\begin{align*}
	g_{G_1}(u)-1 ={} & \frac{a}{2\pi(2\sigma^2 +
			\tau^2/4)}\exp\left\{-\frac{\norm{u}^2}{2(2\sigma^2 +
		\tau^2/4)}\right\}
	\\
	&+\frac{1}{\rho_{G_0}4\pi\sigma^2}\exp\left(-\frac{\norm{u}^2}{4\sigma^2}\right).
\end{align*}
In Figure~\ref{f:pcfs} we present the isotropic PCF $g_{G_1,o}(r)=g_{G_1}(u)$
as a function of the inter-point distance $r=\|u\|$ in case of each of
the three models of $G_0$, where using an obvious notation,
$g_{G_1,o}^{\textup{det}}<g_{G_1,o}^{\textup{Pois}}<g_{G_1,o}^{\text{wper}}$.
Most notable is the fact that $g_{G_1,o}^{\textup{det}}(r)$ exhibits
repulsion at midrange distances $r$. For $g_{G_1,o}^{\textup{wper}}$, we
see a high degree of clustering, which is persistent for large values
of $r$; this will of course be even more pronounced if we increase the
value of $\tau$; 
whilst decreasing $\sigma$ will increase the peak at small values
of~$r$.  Figure~\ref{f:sim} shows simulations of $G_1$ in each of the
three cases of the model of $G_0$.  As expected, we clearly see a
higher degree of repulsion when $G_0$ is a determinantal point process
(the left most plot) and a higher degree of clustering when $G_0$ is a
weighted permanental point process (the right most plot). In
particular, the clusters are more distinguishable when $G_0$ is a
determinantal point process, and this will be even more pronounced if
decreasing $\sigma$ because the spread of clusters then decrease.

\begin{figure}[ht!]
	\centering
	\includegraphics[scale=0.33]{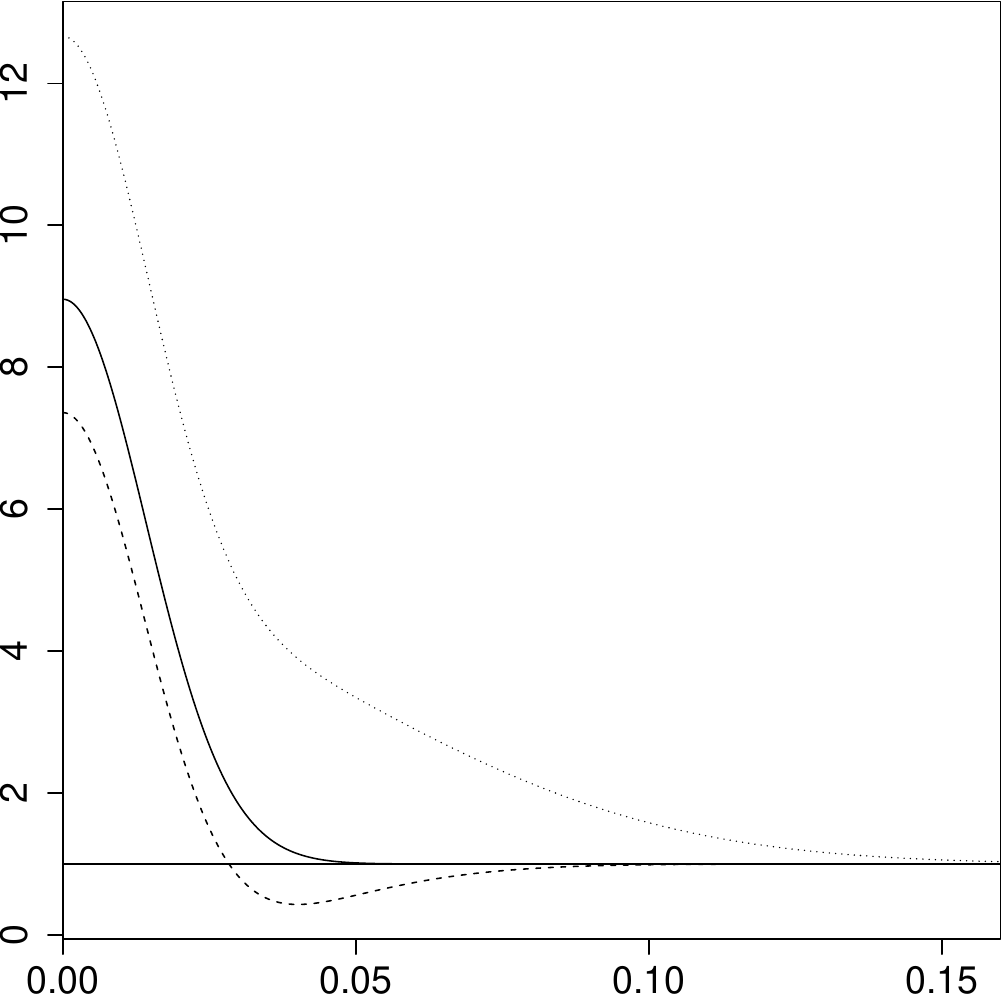}
	\caption{The PCFs of $G_1$ when $G_0$ is a determinantal, Poisson,
		or weighted permanental point process (dashed, solid, and dotted
		respectively), with parameters and Gaussian offspring PDF as
		specified in the text. The solid horizontal line is the
		 PCF for a Poisson process.}
	\label{f:pcfs}
\end{figure}
\begin{figure}[ht!]
	\centering
	\includegraphics[width=0.31\textwidth]{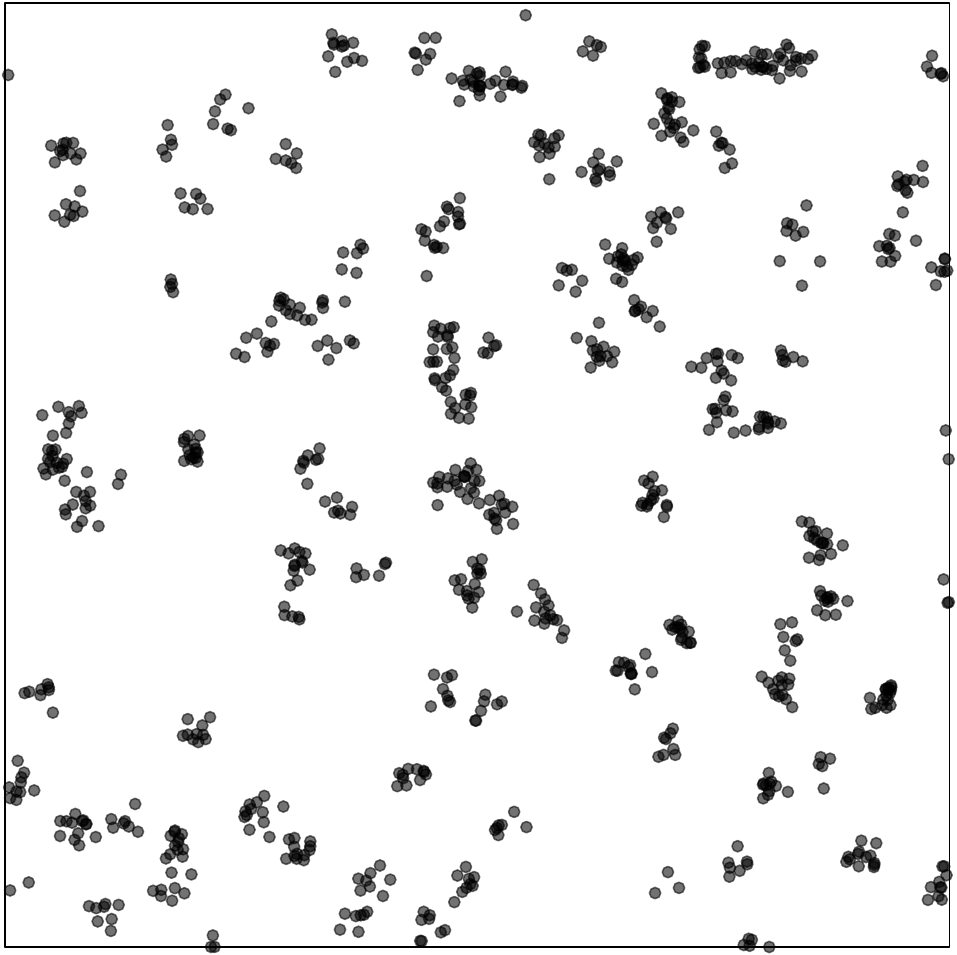}
	\hfill
	\includegraphics[width=0.31\textwidth]{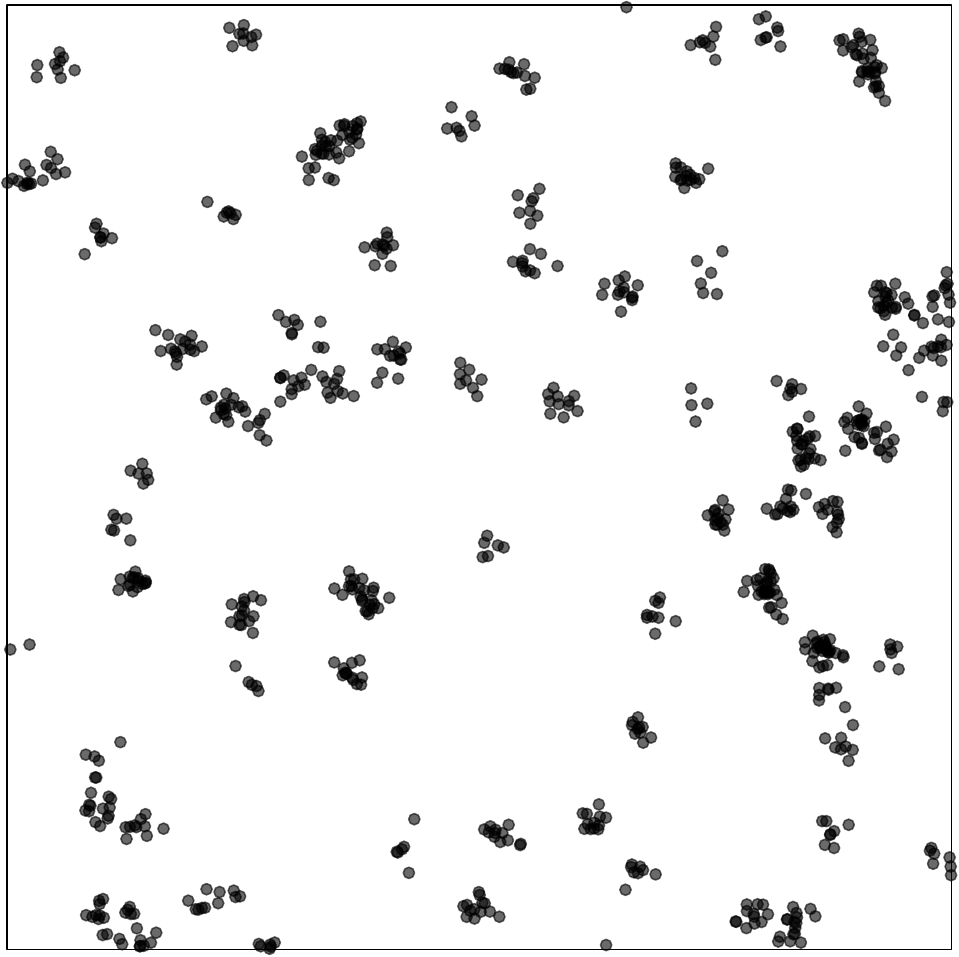}
	\hfill
	\includegraphics[width=0.31\textwidth]{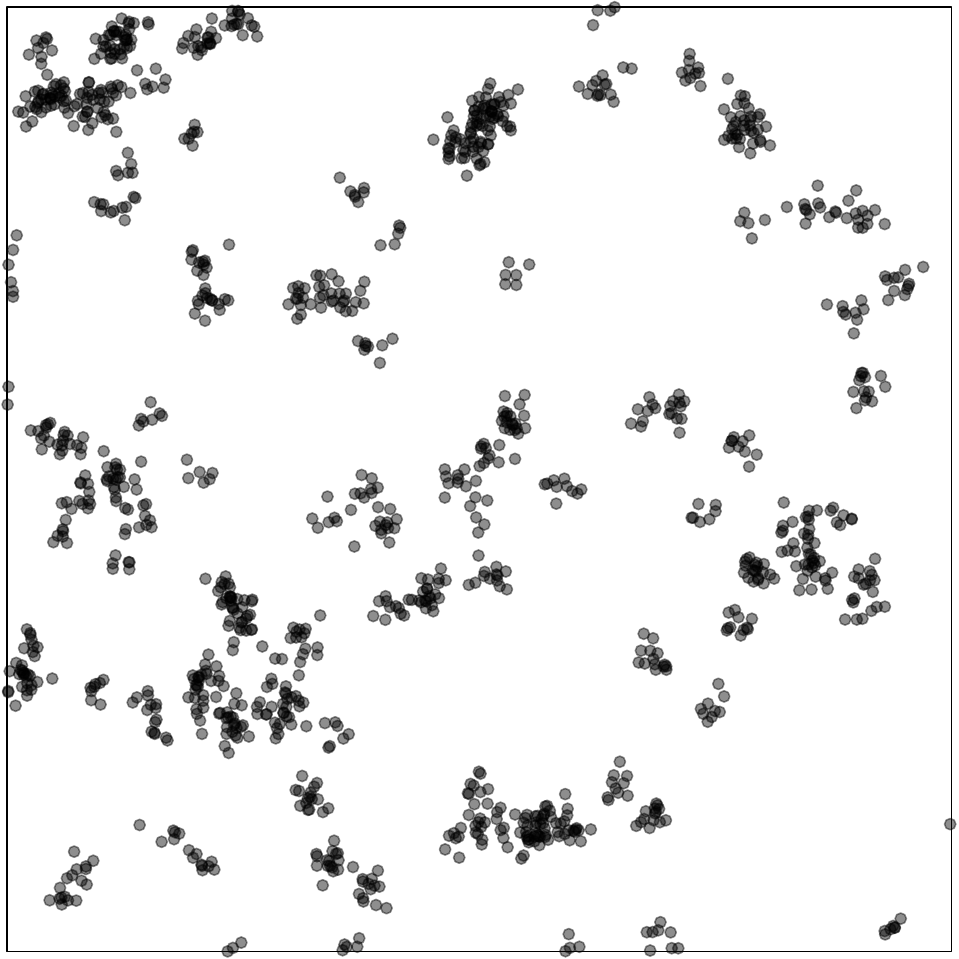}
	\caption{Simulations of $G_1$ restricted to a unit square when $G_0$
		is a determinantal (left panel), Poisson (middle panel), or
		weighted permanental (right panel) point process, see
		Figure~\ref{f:pcfs} and the text.}
	\label{f:sim}
\end{figure}

\subsubsection{Proof of Theorem~\ref{t:main}}\label{s:PCFResultsProof}
Shimatani in \cite{shimatani:10} verified Theorem~\ref{t:main} in the special case where $q_1=q_2=\dots=0$, 
$b_1=b_2=\dots=0$ (as is the case if $Z_1,Z_2,\ldots$ are stationary
Poisson processes), and $c_1=c_2=\dots>0$, in which case the terms in
\eqref{e:main3}--\eqref{e:main4} are zero. If $c_1=c_2=\dots=0$, then
\eqref{e:main2} is zero and by \eqref{e:cndef}, with probability one,
$\#Y_{n,x}\in\{0,1\}$ for all $x\in G_{n-1}$ and $n=1,2,\ldots$.
Consequently, the proof of Theorem~\ref{t:main} is trivial if
$c_1=c_2=\dots=0$ and both $G_0$ and $Z_1,Z_2,\ldots$ are stationary
Poisson processes, because then $a=0$, $b_1=b_2=\dots=0$,
$G_1,G_2,\ldots$ are stationary Poisson processes, and the class of stationary Poisson processes is closed under IID random
	shifts of the points, thinning, and superposition.
The general proof of Theorem~\ref{t:main} follows by induction from
the following lemma 
together with Lemma~\ref{l:2}.

\begin{lemma}\label{l:1} 
	If $\rho_{G_{n-1}}>0$, $\rho_{G_n}>0$, and $g_{G_{n-1}}$ and
	$g_{Z_{n}}$ exist, then $g_{G_n}$ exists and is given by
\begin{align*}\label{e:PCFXn}
		\MoveEqLeft[2]g_{G_n}(u-v)-1 \\
		&=\left(\frac{\rho_{G_{n-1}}\beta_np_n}{\rho_{G_n}}\right)^2
		\biggl[\iint \{g_{G_{n-1}}(x_1-x_2)-1\}f_n(u-x_1)
		f_n(v-x_2)\,\mathrm dx_1\,\mathrm dx_2 \\
		&\hspace{35mm}+\frac{c_n}{\rho_{G_{n-1}}}f_n*\tilde f_n(u-v)\biggr] \\
		&+\left(\frac{\rho_{G_{n-1}}q_n}{\rho_{G_n}}\right)^2\left\{g_{G_{n-1}}(u-v) - 1\right\} +\left(\frac{\rho_{Z_n}}{\rho_{G_n}}\right)^2\left\{g_{Z_n}(u-v)-1\right\} \\
		&+\frac{\rho_{G_{n-1}}^2\beta_np_nq_n}{\rho_{G_n}^2}\bigg[\int\left\{g_{G_{n-1}}(v-x)-1\right\}\left\{f_n(u-x)+\tilde f_n(u-x)\right\}\,\mathrm dx\\
		&\hspace{2.8cm}+\frac{1}{\rho_{G_{n-1}}}\left\{f_n(u-v)+\tilde f_n(u-v)\right\}\bigg]
\end{align*}
	for any $u,v\in\mathbb R^d$.
\end{lemma}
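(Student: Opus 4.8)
The plan is to compute the second-order factorial moment measure of $G_n = W_n \cup G_{n-1}^{\rm thin} \cup Z_n$ by conditioning on $G_{n-1}$ and on the two families of Bernoulli variables, and then to peel off the contributions of the five (disjoint-support) types of ordered pairs of points that can occur. First I would fix a bounded Borel test function $h$ with compact support and evaluate $\mathrm E\sum_{x_1,x_2\in G_n:\,x_1\neq x_2}h(x_1,x_2)$. Since $W_n$, $G_{n-1}^{\rm thin}$, and $Z_n$ are pairwise disjoint almost surely, the double sum splits into nine blocks: the three ``diagonal'' blocks (both points in $W_n$, both in $G_{n-1}^{\rm thin}$, or both in $Z_n$) and six ``cross'' blocks. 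The $Z_n$--$Z_n$ block gives $\rho_{Z_n}^2\iint h\,g_{Z_n}$ directly; the $G_{n-1}^{\rm thin}$--$G_{n-1}^{\rm thin}$ block gives $\rho_{G_{n-1}}^2 q_n^2\iint h\,g_{G_{n-1}}$ by independence of the retention thinning from $G_{n-1}$; all cross blocks involving $Z_n$ vanish against the ``$-1$'' because $Z_n$ is independent of everything else, so they contribute only the Poisson-type product term $\rho_{Z_n}\rho_{\,\cdot}\iint h$, which is absorbed when one writes the answer in reduced form $g_{G_n}-1$ (this is exactly why no $g-1$ term survives for those pairs).

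The heart of the computation is the $W_n$--$W_n$ block. Here I would condition on $G_{n-1}$ and split ordered pairs $(y_1,y_2)$ of thinned offspring into those coming from the \emph{same} parent $x\in G_{n-1}$ and those from \emph{distinct} parents $x_1\neq x_2$. For the same-parent pairs, using that the points of $Y_{n,x}$ are IID with density $f_n$, that the thinning is independent with retention probability $p_n$, and that $\mathrm E\{\#W_{n,x}(\#W_{n,x}-1)\} = p_n^2\,\mathrm E\{\#Y_{n,x}(\#Y_{n,x}-1)\} = p_n^2\beta_n^2 c_n$ by \eqref{e:cndef}, summing over $x\in G_{n-1}$ and taking expectations produces $\rho_{G_{n-1}}\beta_n^2 p_n^2 c_n\iint h(u,v)\,f_n*\tilde f_n(u-v)\,\mathrm du\,\mathrm dv$ after a change of variables. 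For the distinct-parent pairs, one point is a displaced copy of $x_1$ and the other of $x_2$, each displacement IID with density $f_n$ and each retained independently with probability $p_n$; conditioning on $G_{n-1}$ and using \eqref{e:PCF} for $G_{n-1}$ gives $\rho_{G_{n-1}}^2\beta_n^2 p_n^2\iint\! g_{G_{n-1}}(x_1-x_2)\big(\int h(u,v)f_n(u-x_1)f_n(v-x_2)\,\mathrm du\,\mathrm dv\big)\mathrm dx_1\mathrm dx_2$; subtracting and adding the Poisson baseline and rearranging yields the bracketed expression with the $\{g_{G_{n-1}}-1\}$ integral plus the $c_n/\rho_{G_{n-1}}$ correction, both multiplied by $(\rho_{G_{n-1}}\beta_n p_n/\rho_{G_n})^2$ once we divide through by $\rho_{G_n}^2$. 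The $W_n$--$G_{n-1}^{\rm thin}$ cross block (and its mirror, which is why $f_n+\tilde f_n$ appears) is handled the same way: one point is a retained point $x_2\in G_{n-1}$, the other is a displaced-and-thinned offspring of some parent $x_1\in G_{n-1}$; the case $x_1=x_2$ is impossible up to a null set since the origin is not in $Y_{n,x}$, so $x_1\neq x_2$ always, and conditioning on $G_{n-1}$ gives the $\int\{g_{G_{n-1}}-1\}\{f_n+\tilde f_n\}$ term with the $\frac1{\rho_{G_{n-1}}}\{f_n+\tilde f_n\}$ correction coming from the $x_1\neq x_2$ restriction against the product-moment baseline, all carrying the prefactor $\rho_{G_{n-1}}^2\beta_n p_n q_n/\rho_{G_n}^2$ (the exponent on $\rho_{G_{n-1}}$ being $2$ because one factor cancels against the correction term's $1/\rho_{G_{n-1}}$, exactly as written).

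Finally I would collect all blocks, observe that every Poisson-baseline product term sums (by Proposition~\ref{p:intensity}) to $\rho_{G_n}^2\iint h$, so that after subtracting it the identity holds for $g_{G_n}-1$ as displayed; existence of $g_{G_n}$ follows because each term on the right is an integrable function of $u-v$ (finiteness uses $\beta_n,\nu_n<\infty$, $\rho_{Z_n}<\infty$, and the assumed existence of $g_{G_{n-1}}$ and $g_{Z_n}$), and translation invariance is inherited from stationarity of $G_n$. The main obstacle is bookkeeping rather than depth: one must be careful that the five surviving reduced-PCF contributions are matched with the correct powers of the intensity ratios and the correct $\delta_0$-free correction terms, and in particular that the origin-exclusion assumption on cluster points is invoked precisely where it is needed (to rule out $x_1=x_2$ in the $W_n$--$W_n$ same-support-but-distinct-parent count and, more importantly, in the $W_n$--$G_{n-1}^{\rm thin}$ block). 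Everything else is Fubini together with the IID structure of displacements and the independence of the thinnings.
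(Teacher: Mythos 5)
Your overall strategy---splitting the second-order factorial moment of $G_n=W_n\cup G_{n-1}^{\rm thin}\cup Z_n$ into diagonal and cross blocks, handling the $W_n$--$W_n$ block by conditioning on $G_{n-1}$ and separating same-parent from distinct-parent pairs, letting the cross blocks with $Z_n$ contribute only Poisson baselines, and collecting all baselines into $\rho_{G_n}^2\iint h$ via Proposition~\ref{p:intensity}---is essentially the paper's route (the paper merely packages the $W_n$--$W_n$ block through the PCF of $Y_n$ in \eqref{e:hhh2} and thinning invariance, which is equivalent to your direct computation with the extra $p_n^2$). However, your treatment of the $W_n$--$G_{n-1}^{\rm thin}$ cross block contains a genuine error. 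You assert that the parent $x_1$ of the offspring point and the retained point $x_2$ must be distinct ``since the origin is not in $Y_{n,x}$''. That assumption only guarantees that an offspring point differs from its own parent, i.e.\ $W_n\cap G_{n-1}=\emptyset$ almost surely; it does not prevent a point of $G_{n-1}$ from being simultaneously retained and a parent, so the case $x_1=x_2$ is not only possible but is exactly the source of the correction term $\frac{1}{\rho_{G_{n-1}}}\{f_n(u-v)+\tilde f_n(u-v)\}$. Conditioning on $G_{n-1}$, the cross moment is a double sum over $(x_1,x_2)\in G_{n-1}\times G_{n-1}$: the diagonal part is evaluated by Campbell's theorem and yields $q_n\beta_np_n\rho_{G_{n-1}}\iint h(u,v)f_n(u-v)\,\mathrm du\,\mathrm dv$, while the off-diagonal part yields, via \eqref{e:PCF} applied to $G_{n-1}$, the $g_{G_{n-1}}*\tilde f_n$ contribution---this is precisely the paper's displayed formula for $\mathrm E\{\#(W_n\cap A_1)\#(G_{n-1}^{\rm thin}\cap A_2)\}$, whose first summand $\frac{1}{\rho_{G_{n-1}}}f_n$ is the same-point (parent equals retained point) term. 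If you genuinely restrict to $x_1\neq x_2$ as you state, the block produces only $\int\{g_{G_{n-1}}-1\}\{f_n+\tilde f_n\}$ plus baseline, and the $\frac{1}{\rho_{G_{n-1}}}\{f_n+\tilde f_n\}$ term never appears; your explanation that it comes ``from the $x_1\neq x_2$ restriction against the product-moment baseline'' does not hold, because for a cross moment between two distinct (conditionally independent given $G_{n-1}$) processes there is no diagonal exclusion that could generate such a correction.

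The same confusion resurfaces in your closing remark that the origin-exclusion assumption is needed ``to rule out $x_1=x_2$'' in the $W_n$--$W_n$ and $W_n$--$G_{n-1}^{\rm thin}$ blocks. Its actual role is to ensure $W_n$, $G_{n-1}^{\rm thin}$ (and $Z_n$, by assumption) are pairwise disjoint, so every cross pair consists of two distinct points and the intensities add as in \eqref{e:intensityXn}; in the $W_n$--$W_n$ block the same-parent versus distinct-parent split is over parent indices and needs no such assumption. The rest of your bookkeeping---the identity $\mathrm E\{\#W_{n,x}(\#W_{n,x}-1)\}=p_n^2\beta_n^2c_n$, the thinning-invariance argument for the $G_{n-1}^{\rm thin}$--$G_{n-1}^{\rm thin}$ block, the vanishing reduced cross terms with $Z_n$, and the baseline count---is correct and matches the paper, so the fix is local: include the diagonal $x_1=x_2$ case in the cross block and derive the correction term from it.
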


\goodbreak

\begin{proof}
	Note that $Y_n$ is stationary with intensity
	\begin{equation}\label{e:hhh1}
		\rho_{Y_n}=\rho_{G_{n-1}}\beta_n.
	\end{equation} 
	It follows straightforwardly from \eqref{e:DefYn}, \eqref{e:PCF},
	and Fubini's theorem that its PCF is given by
	\begin{equation}\label{e:hhh2}
		\begin{aligned}
			\rho_{Y_n}^2g_{Y_n}(u-v)
			= {} & \rho^2_{G_{n-1}}\beta_n^2\iint g_{G_{n-1}}(x_1-x_2)f_n(u-x_1)f_n(v-x_2)\,\mathrm dx_1\,\mathrm dx_2 \\
			&+ \rho_{G_{n-1}}c_n \beta_n^2 f_n*\tilde f_n(u-v)
		\end{aligned}
	\end{equation} 
	for any $u,v\in\mathbb R^d$, where the two terms on the right hand
	side correspond to pairs of points from $Y_n$ belonging to different
	clusters and the same cluster, respectively.  Hence by
	\eqref{e:DefWn} and \eqref{e:hhh1}, $W_n$ is stationary with
	intensity
	\begin{equation}\label{e:hhh3}
		\rho_{W_n}=p_n\rho_{Y_n}=\rho_{G_{n-1}}\beta_n p_n
	\end{equation}
	and PCF
	\begin{equation}\label{e:hhh4}
		\begin{aligned}
			g_{W_n}(u-v) ={} &g_{Y_n}(u-v)
			\\
			= {} &
			\iint g_ {G_{n-1}}(x_1-x_2)f_n(u-x_1)f_n(v-x_2)\,\mathrm dx_1\,\mathrm dx_2\\
			&+\frac{c_n}{\rho_{G_{n-1}}}f_n*\tilde f_n(u-v)
		\end{aligned}
	\end{equation}
	where the first identify follows from the fact that PCFs are
	invariant under independent thinning, and where \eqref{e:hhh2} is
	used to obtain the second identity.  
		Also, for disjoint Borel sets $A_1,A_2\subseteq\mathbb R^d$, it follows from items (a)--(c) that 
	\begin{align*}
		\MoveEqLeft[33]\mathrm E\{\#(W_n\cap A_1)\#(G_{n-1}^{\rm thin}\cap A_2)\} \\ 
		= \rho_{W_n}\rho_{G_{n-1}^{\rm thin}}\int_{A_1}\int_{A_2} \left\{ \frac{1}{\rho_{G_{n-1}}}f_n(x_1-x_2) + g_{G_{n-1}}*\tilde{f}_n(x_1-x_2)\right\}\,\mathrm dx_1\mathrm dx_2.
	\end{align*}
	Furthermore, by \eqref{e:DefXn}, \eqref{e:PCF}, and Fubini's
	theorem it is readily seen
	that $G_n$ has PCF given by
	\begin{equation*}
		\begin{aligned}
			\rho_{G_n}^2g_{G_n}(x)
			= {} &\rho_{W_n}^2g_{W_n}(x) + \left(\rho_{G_{n-1}}^{\rm thin}\right)^2g_{G_{n-1}^{\rm thin}}(x) + \rho_{Z_n}^2g_{Z_n}(x) \\
			&+ 2\rho_{W_n}\rho_{Z_n} + 2\rho_{Z_n}\rho_{G_{n-1}}^{\rm thin} \\
			&+ 2\rho_{W_n}\rho_{G_{n-1}^{\rm thin}}
			\left\{g_{G_{n-1}}*f_n(x) + \frac{1}{\rho_{G_{n-1}}}f_n(x)\right\}
		\end{aligned}
	\end{equation*} 
	where the six terms on the right hand side correspond to pairs of
		points from $W_n$, $Z_n$, $G_{n-1}^{\rm thin}$, $W_n$ and $Z_n$, $Z_n$ and $G_{n-1}^{\rm thin}$, and $W_n$ and $G_{n-1}^{\rm thin}$, respectively, where the latter three cases can be ordered in two
		ways. Combining all this with the first
	identity in \eqref{e:intensityXn} and \eqref{e:hhh3}, we easily
	obtain
	\begin{equation*}
		\begin{aligned}
		\MoveEqLeft[3] g_{G_n}(u-v)-1 \\
		= {} &\left(\frac{\rho_{G_{n-1}}\beta_np_n}{\rho_{G_n}}\right)^2\left\{g_{W_n}(u-v)-1\right\}
		+\left(\frac{\rho_{Z_n}}{\rho_{G_n}}\right)^2\left\{g_{Z_n}(u-v)-1\right\} \\
		&+\left(\frac{\rho_{G_{n-1}}q_n}{\rho_{G_n}}\right)^2\left\{g_{G_{n-1}}(u-v) - 1\right\} \\
		&+ \rho_{W_n}\rho_{G_{n-1}^{\rm thin}}
		\bigg[\int (g_{G_{n-1}}(v-x) - 1)\left\{f_n(u - x)+\tilde f_n(u-x)\right\}\, \mathrm dx \\
		&\hspace{3cm}+ \frac{1}{\rho_{G_{n-1}}}\left\{f_n(u - v)+\tilde f_n(u-v)\right\}\bigg].
		\end{aligned}
	\end{equation*} 
	This combined with \eqref{e:hhh4} imply the result in Lemma~\ref{l:1}. 
\end{proof}

\subsubsection{Extension}\label{s:extention}
More generally than in Section~\ref{s:PCFResults} we may consider the
case where the PCF of the initial generation $G_0$ and the noise $Z_n$
are affine expressions:
\begin{equation}\label{e:PCFsuper}
	g_{G_0}-1=a_0+a_1f_{0,1}*\tilde f_{0,1}+\dots+a_kf_{0,k}*\tilde
	f_{0,k}
\end{equation}
and
\begin{equation}\label{e:PCFnoisesuper}
	g_{Z_n}-1
	=b_{n,0}+b_{n,1}f_{{Z_n},1}*\tilde f_{{Z_n},1}+\dots+b_{n,l}f_{{Z_n},l}*\tilde f_{{Z_n},l},
	\qquad n=1, 2,\ldots,
\end{equation}
for real constants $a_0,\ldots,a_k,b_{n,1},\ldots,b_{n,l}$ and PDFs
$f_{0,1},\ldots,f_{0,k},f_{{Z_n},1},\ldots,f_{{Z_n},l}$.  For
instance, the superposition of $k$ independent Poisson, weighted
permanental, or weigthed determinantal point processes has a PCF of
the form \eqref{e:PCFsuper} or \eqref{e:PCFnoisesuper}. 
Also Theorem~\ref{t:main} provides examples of PCFs of the form \eqref{e:PCFsuper} or \eqref{e:PCFnoisesuper}. 
Assuming \eqref{e:PCFsuper} and \eqref{e:PCFnoisesuper},
Theorem~\ref{t:main} is immediately generalised  by replacing
$af_0*\tilde{f}_0$ in \eqref{e:main1} by \eqref{e:PCFsuper}, $b_nf_{Z_n}*\tilde{f}_{Z_n}$  in \eqref{e:main4} by
\eqref{e:PCFnoisesuper}, and similarly for
$b_if_{Z_i}*\tilde{f}_{Z_i}$ in \eqref{e:main3}.

\section{Same reproduction system}\label{s:samesystem}

Throughout this section we assume the same reproduction system over
generations, that is, in items (a)--(d), $\beta_n=\beta$, $\nu_n=\nu$,
$f_n=f$, $p_n=p$, $q_n=q$ do not depend on $n$, $Z_1, Z_2,\ldots$ are IID
stationary point processes, so $\rho_{Z_n}=\rho_Z$ for $n=1,2,\ldots,$
and $\rho_{G_0}=\rho_{G_1}=\dots=\rho_G>0$.  Note that the noise
process $Z_n$ and the initial generation process $G_0$ need not be
Poisson processes and the offspring densities need not be Gaussian as
in Shimatani's paper \cite{shimatani:10}.
By \eqref{e:intensityXn}, we have either
	\begin{alignat}{3}
		\label{e:infinitecluster}
		\beta p + q&=1 &&\quad\mbox{and}\quad& \rho_Z&=0,
		\\
		\shortintertext{or}
		\label{e:finitecluster}
		\beta p + q&<1 &&\quad\mbox{and}\quad& \rho_Z&>0.
	\end{alignat}
In case of \eqref{e:finitecluster},
	\begin{equation}\label{e:int_finitecluster}
	\rho_G=\rho_Z/(1-\beta p - q).
	\end{equation}

\subsection{Limiting pair correlation function}\label{s:second main}

Under the assumptions above and in Theorem~\ref{t:main}, setting $0^0=1$, the PCF
simplifies after a straightforward calculation to
\begin{equation}\label{e:PCFsamereproduction}
	\begin{aligned}
	g_{G_n}(u)-1 = 
		{} &a f_0*\tilde f_0*\sum_{k_1=0}^{n}\sum_{k_2=0}^{n}\binom{n}{k_1}\binom{n}{k_2}q^{2n-k_1-k_2}(\beta p)^{k_1+k_2}f^{*k_1}*\tilde f^{*k_2}(u)
		\\
		&+\left\{\frac{c(\beta p)^2f*\tilde{f}+\beta pq(f+\tilde f)}{\rho_G} + \left(\frac{\rho_{Z}}{\rho_{G}}\right)^2bf_Z*\tilde{f}_Z\right\} \\ &\hspace{4mm}*\sum_{i=0}^{n-1}\sum_{k_1=0}^{i}\sum_{k_2=0}^{i}\binom{i}{k_1}\binom{i}{k_2}q^{2i-k_1-k_2}(\beta p)^{k_1+k_2}f^{*k_1}*\tilde f^{*k_2}(u),
	\end{aligned}
\end{equation} 
for $n=1,2,\ldots$, where
\begin{equation*}
	c=(\nu+\beta^2-\beta)/\beta^2\quad\mbox{if $\beta>0$},\qquad c=0\quad\mbox{if $\beta=0$},
\end{equation*}
$f^{*n}$ is the $n$-th convolution power of $f$ if $n>0$, and $f^{*0}*\tilde f^{*0}=\delta_0$.
For instance, consider the case 
$f\sim N_d(\sigma^2)$ and $f_Z\sim N_d(\kappa^2)$, and suppose $d\ge3$ in case of \eqref{e:infinitecluster}. 
Then the binomial formula combined with either \eqref{e:infinitecluster} 
or \eqref{e:finitecluster} 
imply that the first double sum in \eqref{e:PCFsamereproduction} tends to 0 as $n\rightarrow\infty$, and hence
\begin{align}\label{e:gGnormal}
		g_G(u)-1 :={}&\lim_{n\rightarrow\infty}g_{G_n}(u)-1 \nonumber\\
		= {} &\frac{c}{\rho_G}\sum_{i=0}^{\infty}\sum_{k_1=0}^{i}\sum_{k_2=0}^{i}\binom{i}{k_1}\binom{i}{k_2}
		\frac{q^{2i-k_1-k_2}(\beta p)^{2+k_1+k_2}}{\left\{2\pi(2+k_1+k_2)\sigma^2\right\}^{d/2}} \nonumber\\
		&\hspace{30mm} \cdot\exp\left\{-\frac{\|u\|^2}{2(2+k_1+k_2)\sigma^2}\right\} \nonumber\\
		&+\frac{2}{\rho_G}\sum_{i=0}^{\infty}\sum_{k_1=0}^{i}\sum_{k_2=0}^{i}\binom{i}{k_1}\binom{i}{k_2}
		\frac{q^{2i-k_1-k_2+1}(\beta p)^{1+k_1+k_2}}{\left\{2\pi(1+k_1+k_2)\sigma^2\right\}^{d/2}} \nonumber\\
		&\hspace{35mm} \cdot\exp\left\{-\frac{\|u\|^2}{2(1+k_1+k_2)\sigma^2}\right\} \nonumber\\
		&+\left(\frac{\rho_Z}{\rho_G}\right)^2\sum_{i=0}^{\infty}\sum_{k_1=0}^{i}\sum_{k_2=0}^{i}\binom{i}{k_1}\binom{i}{k_2}
		\frac{q^{2i-k_1-k_2}(\beta p)^{k_1+k_2}}{\left[2\pi\left\{(k_1+k_2)\sigma^2 + 2\kappa^2 \right\}\right]^{d/2}} \nonumber\\
		&\hspace{35mm} \cdot\exp\left[-\frac{\|u\|^2}{2\left\{(k_1+k_2)\sigma^2 + 2\kappa^2 \right\}}\right]
\end{align} 
is finite.
Shimatani in \cite{shimatani:10} only showed that this is finite under the assumption 
 $d=2$, $b=q=0$, and
$c>0$. Then Shimatani noticed that $\beta p=1$ and $\rho_Z=0$ (which is \eqref{e:infinitecluster} for $q=0$) imply divergence of
$g_{G_n}$ as $n\to\infty$, whilst $\beta p<1$ and $\rho_Z>0$ (which is \eqref{e:finitecluster} for $q=0$) imply
convergence.  Further, in the case of convergence and when $\beta p\approx1$, he
discussed an approximation of $g_G(u)$ that depends on whether $\|u\|$
is close to 0 or not.

In general (i.e., without making the assumption of normal distributions and so on), if we assume $g_{G_n}-1$
has a finite limit and \eqref{e:finitecluster} 
is satisfied, then
\begin{equation}\label{e:PCFequil}
\begin{aligned}
g_G(u)-1
={}&\left\{\frac{c(\beta p)^2f*\tilde{f}+\beta pq(f+\tilde f)}{\rho_G} + \left(\frac{\rho_{Z}}{\rho_{G}}\right)^2bf_Z*\tilde{f}_Z\right\} \\
&*\sum_{i=0}^{\infty}\sum_{k_1=0}^{i}\sum_{k_2=0}^{i}\binom{i}{k_1}\binom{i}{k_2}q^{2i-k_1-k_2}(\beta p)^{k_1+k_2} f^{*k_1}*\tilde f^{*k_2}(u)
\end{aligned}
\end{equation} 
which does not depend on $a$ or $f_0$.  Here, as $\beta p +q\uparrow 1$,
$\rho_Z/\rho_G$ goes to 0, meaning that the less noise we consider,
the less it matters which type of PCF for the noise process $Z_n$ we choose. On the
other hand, as $\beta p+q\downarrow 0$, $g_G-1$ tends to
$bf_{Z}*\tilde{f}_{Z}$, which simply is the PCF of 
$Z_n$.

Considering the situation at the end of Section~\ref{s:PCFResults},
assume that $d=2$, $q=0$, $f\sim N_d(\sigma^2)$, and
$g_{Z_n}-1=bf_Z*\tilde{f}_Z$ (corresponding to \eqref{e:PCFX}) with
$f_Z\sim N_d(\kappa^2/8)$ and $b=0$, $b=-(\sqrt{\pi}\kappa)^2$, and
$b=2(\sqrt{\pi}\kappa)^2$ for the Poisson, determinantal, and weighted
permanental point process, respectively.  Then $g_G(u)$ is given by
\eqref{e:gGnormal}, where $d=2$ and $\kappa^2$ is replaced by
$\kappa^2/8$.  Also assume that $p=1$, $\sigma=0.1$, $\rho_G=100$, and
the number of points in a cluster is Poisson distributed (implying
$c=1$) with mean $\beta=0.8$, so $\rho_Z=20$.  Finally, assume
$\kappa=0.1$ in case of weighted permanental noise and
$\kappa = 1/\sqrt{\rho_Z\pi}$ in case of determinantal noise (the most
repulsive Gaussian determinantal point process). Shimatani in 
\cite{shimatani:10} discussed the case where $\beta p=0.99$
-- a plot (omitted here) shows that the limiting PCFs
corresponding to the three models of noise processes are then
effectively equal.  By lowering $\beta p$, the reproduction system is
diminished, and hence depending on the model type, a higher degree of
regularity or clustering is obtained.  This will also increase the
rate of convergence because the number of generations initialized by a
single point will be fewer. Note that in Figure~\ref{f:pcfslim} the
convergence is already rapid as $g_{G_8}$ and $g_{G_{16}}$ are
practically indistinguishable. Figure~\ref{f:pcfslim} further shows
that it is only for small or moderate inter-point distances that the three
limiting PCFs clearly differ.

\begin{figure}[ht]
	\centering
	\includegraphics[width=0.32\textwidth]{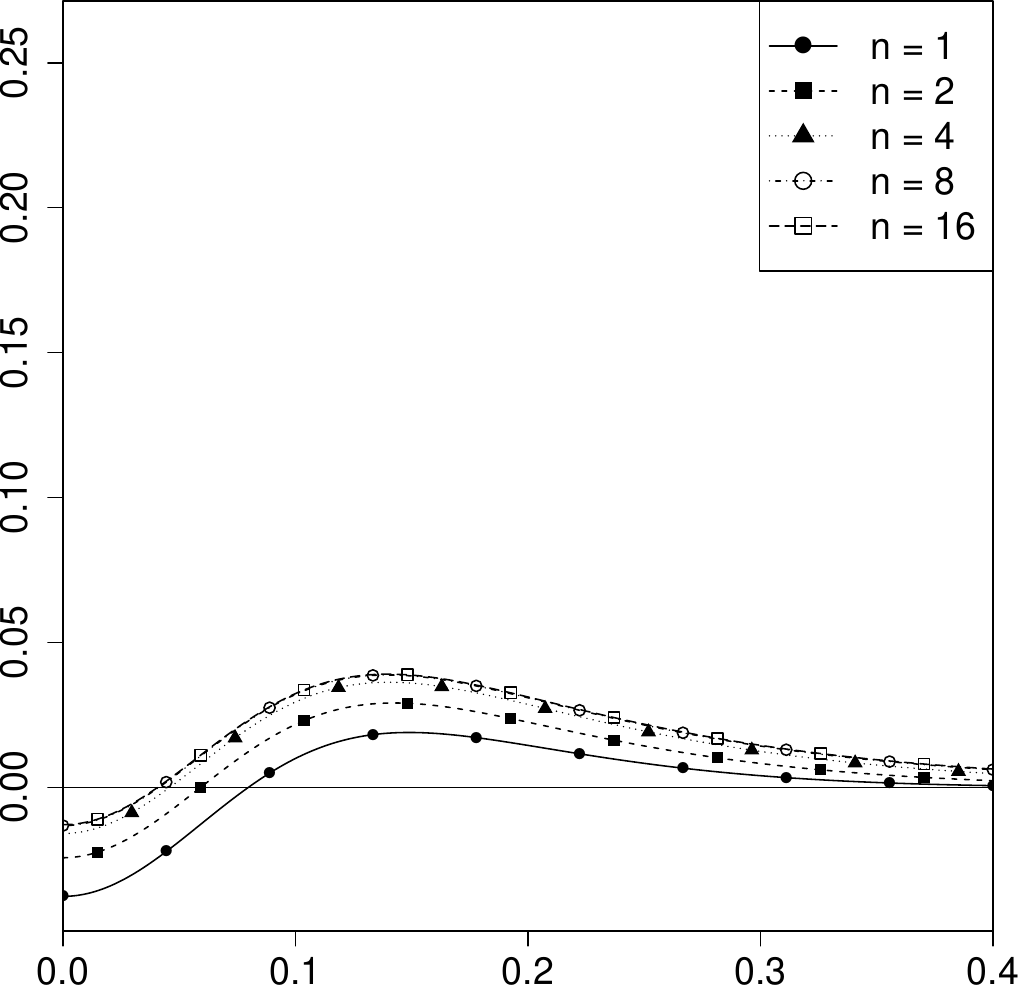}
	\hfill
	\includegraphics[width=0.32\textwidth]{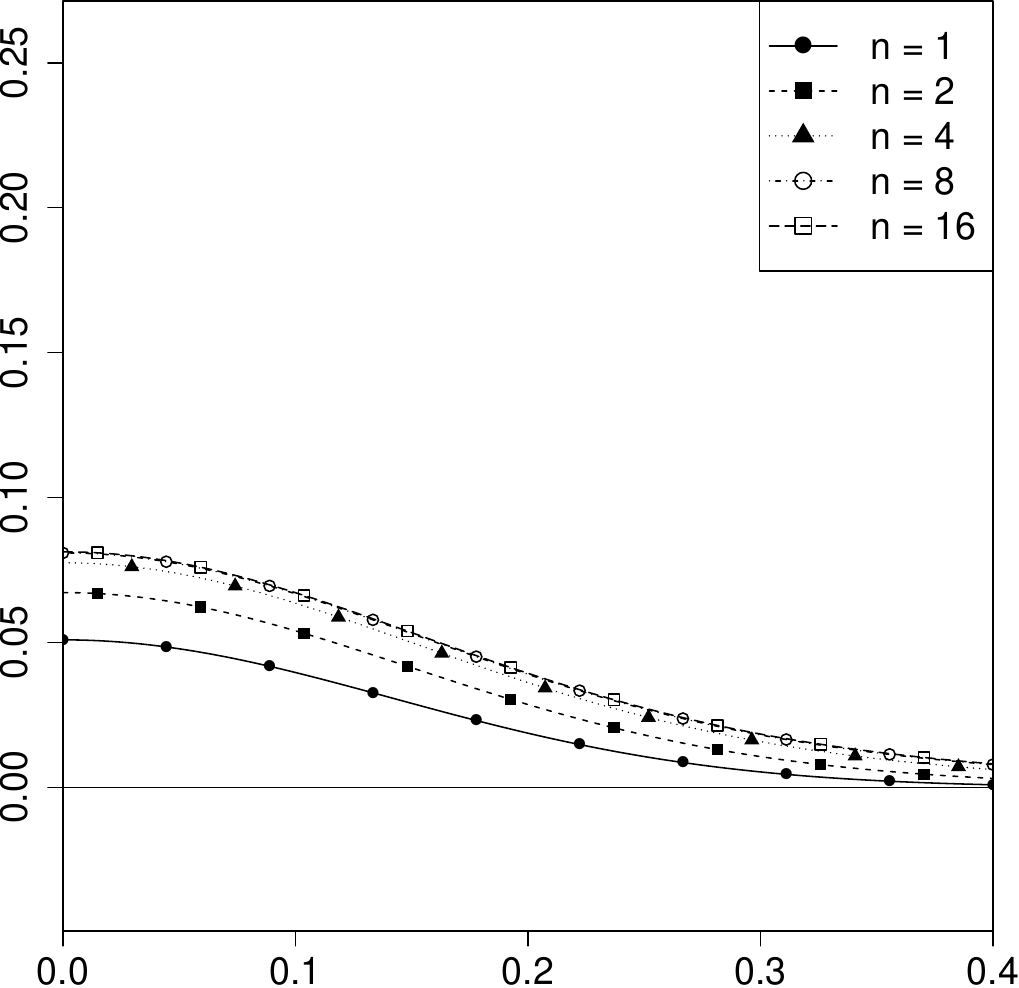}
	\hfill
	\includegraphics[width=0.32\textwidth]{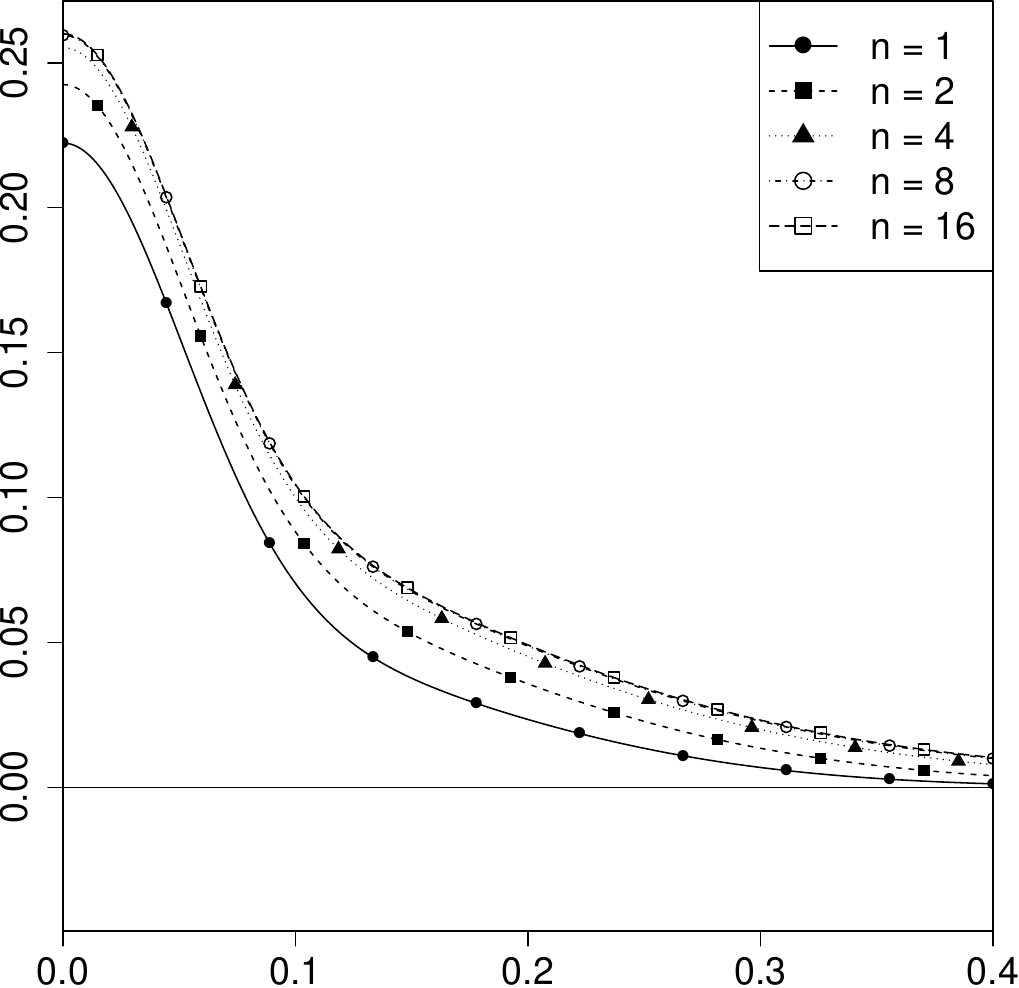}
	\caption{The reduced PCFs $g_{G_n}-1$ when the noise processes are either
		determinantal, Poisson or weighted permanental point processes
		(left to right), with parameters and Gaussian offspring PDF as
		specified in the text. The solid horizontal line is the
		 PCF - 1 for a Poisson process.}
	\label{f:pcfslim}
\end{figure}

\subsection{Second main result}
\label{s:secondmainresult}

Although Shimatani in \cite{shimatani:10} showed convergence of $g_{G_n}$ in the
special case considered above, he did
not clarify whether the Markov chain $G_0,G_1,\ldots$ converges in
distribution to a limit so that this limiting distribution (also
called the equilibrium, invariant, or stationary distribution) has a
PCF given by \eqref{e:PCFequil}.  In order to show that
$G_0,G_1,\ldots$ is indeed converging to a limiting distribution under
more general conditions, and to specify what this is, we construct in
accordance with items (a)--(d) a Markov chain
$\ldots,G_{-1}^{\mathrm{st}},G_0^{\mathrm{st}},G_1^{\mathrm{st}},\ldots$
with times given by all integers $n$ and so that this chain is
time-stationary (its distribution is invariant under discrete time
shifts), as follows.  First, we generate noise processes as in item
(d): Let $\ldots,Z_{-1},Z_0,Z_1,\ldots$ be independent stationary
Poisson processes on $\mathbb R^d$ with intensity $\rho_Z$. Second, for any integer $n$ and point $x\in Z_n$, we consider the family of
all generations initiated by the ancestor $x$, that is, the family
\begin{equation*}
	F_{n,x}=\bigcup_{m=1}^\infty
	W_{n,x}^{(m)}
\end{equation*}
where 
\[W_{n,x}^{(1)}=\begin{cases}W_{n,x}\cup \{x\}&\mbox{if }Q_{1,x}=1,\\ W_{n,x}&\mbox{if }Q_{1,x}=0,\end{cases}\]
is defined by the reproduction mechanism
of independent clustering, independent thinning, and independent retention given in items (a)--(c) (with $\beta_n=\beta$ and
$\nu_n=\nu$), $W_{n,x}^{(2)}$ is the offspring and retained points generated by
the points in $W_{n,x}^{(1)}$ (using the same reproduction mechanism
as before), and so on.
In other words, $W_{n,x}^{(m)}$ is the set of $(m+n)$-th generation
points with common $n$-th generation ancestor $x\in Z_n$.
Moreover, we assume that conditional on
$\ldots,Z_{-1},Z_0,Z_1,\ldots,$ the families $F_{n,x}$ for all
integers $n$ and $x\in Z_n$ are independent (and hence IID).
Finally, for all integers $n$, we let
\begin{equation}\label{e:statext}
	G_n^{\mathrm{st}}=W_n^{\mathrm{st}}\cup Z_n\qquad\mbox{with } W_n^{\mathrm{st}}=\bigcup_{m=1}^\infty\bigcup_{x\in Z_{n-m}}W_{n-m,x}^{(m)}.
\end{equation}

It will be evident from the next theorem that any
$G_n^{\mathrm{st}}$ has intensity $\rho_G$ given by
\eqref{e:int_finitecluster} and PCF $g_G$ given by \eqref{e:PCFequil} (provided $g_G(u-v)$ is a locally integrable function of
$(u, v)\in\mathbb{R}^d\times\mathbb{R}^d$); a formal proof is given in Appendix~\ref{a:2}.
The
proof of Theorem~\ref{t:main2} is based on a coupling construction
between $G_1,G_2,\ldots$ and
$G_1^{\mathrm{st}},G_2^{\mathrm{st}},\ldots$ together with the
following result.

\begin{lemma}\label{l:3}
	Suppose $\beta_n=\beta$, $\nu_n=\nu$, $f_n=f$, $p_n=p$, $q_n=q$, and
	$\rho_{Z_n}=\rho_{Z}$ do not depend on $n\ge1$, where $\beta p + q<1$
	and $\rho_{Z}>0$. Let $K\subset\mathbb R^d$ be a compact set and let
	\begin{equation}\label{e:T0Kst}
		T_{0,K}^{\mathrm{st}}=\sup\bigl\{m\in\{1,2,\ldots\}:W_{0,x}^{(m)}\cap
		K\not=\emptyset \text{ for some }x\in G_0^{\mathrm{st}}\bigr\}
	\end{equation}
	be the last time a point in $K$ is a member of a family initiated by
	some point in the $0$-th generation $G_0^{\mathrm{st}}$.  Then
		\begin{equation*}
			\mathrm E\left(T_{0,K}^{\mathrm{st}}\right)\le |K|\rho_G\frac{\beta p + q}{1 - \beta p - q}
		\end{equation*}
	is finite, and so $T_{0,K}^{\textup{st}}<\infty$ almost surely.
\end{lemma}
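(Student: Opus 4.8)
\emph{Proof proposal.}\quad Put $\xi:=\beta p+q\in(0,1)$, so that $\rho_Z=(1-\xi)\rho_G$ by \eqref{e:int_finitecluster}. The plan is to bound the tail of $T_{0,K}^{\mathrm{st}}$ by counting descendants of the zeroth generation that can reach $K$, and then to sum a geometric series.

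First I would compute the intensity of the ``$m$-th generation seen from generation $0$''. For $m\ge1$ set $W_0^{\mathrm{st},(m)}:=\bigcup_{x\in G_0^{\mathrm{st}}}W_{0,x}^{(m)}$, the set of $m$-th generation points that possess a zeroth generation ancestor. From the construction \eqref{e:statext} one has the disjoint decomposition $W_0^{\mathrm{st},(m)}=\bigcup_{k\ge0}\bigcup_{y\in Z_{-k}}W_{-k,y}^{(k+m)}$. Since the $Z_{-k}$ are independent stationary Poisson processes of intensity $\rho_Z$ and, conditionally on the noise, the families $F_{-k,y}$ are i.i.d.\ translates of a family founded at the origin, Campbell's theorem (applied to each $Z_{-k}$) together with Fubini's theorem gives
\[
	\mathrm E\bigl\{\#\bigl(W_0^{\mathrm{st},(m)}\cap A\bigr)\bigr\}
	=|A|\,\rho_Z\sum_{k\ge0}\mathrm E\bigl\{\#W_{-k,o}^{(k+m)}\bigr\}
\]
for bounded Borel $A$. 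Each generation step replaces one point by its $p$-thinned offspring (mean $\beta p$) together with the point itself when it is retained (probability $q$), so $\mathrm E\{\#W_{-k,o}^{(j)}\}=\xi^{\,j}$ by induction, and hence
\[
	\mathrm E\bigl\{\#\bigl(W_0^{\mathrm{st},(m)}\cap A\bigr)\bigr\}=|A|\,\rho_Z\,\xi^{\,m}\sum_{k\ge0}\xi^{\,k}=|A|\,\rho_G\,\xi^{\,m}.
\]

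Next I would bound $\mathrm P(T_{0,K}^{\mathrm{st}}\ge m)$. On $\{T_{0,K}^{\mathrm{st}}\ge m\}$ there is a point $z\in K$ lying in $W_{0,x}^{(m')}$ for some $x\in G_0^{\mathrm{st}}$ and some $m'\ge m$; tracing $z$ back to generation $m$ gives a point $y\in W_0^{\mathrm{st},(m)}$ such that $z$ equals $y$ or is a descendant of $y$, so $\{y\}$ together with the family descending from $y$ meets $K$. The family descending from a point, being generated by fresh reproduction randomness, is independent of how that point arose; so under the Palm distribution of $W_0^{\mathrm{st},(m)}$ the tagged point carries an independent family of the standalone type, of expected total size $\sum_{j\ge1}\xi^{\,j}=\xi/(1-\xi)$. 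Combining Markov's inequality, the refined Campbell theorem for $W_0^{\mathrm{st},(m)}$ (whose intensity is $\rho_G\xi^{\,m}$), and the elementary bound $\int_{\mathbb R^d}\mathrm P\{(u+R)\cap K\neq\emptyset\}\,\mathrm du\le|K|\,\mathrm E(\#R)$ valid for any point process $R$, one obtains $\mathrm P(T_{0,K}^{\mathrm{st}}\ge m)\le C_K\,\xi^{\,m}$ with $C_K$ a constant of order $|K|\rho_G$. Then $\mathrm E(T_{0,K}^{\mathrm{st}})=\sum_{m\ge1}\mathrm P(T_{0,K}^{\mathrm{st}}\ge m)\le C_K\sum_{m\ge1}\xi^{\,m}<\infty$, which, after tracking the constants through the geometric series, yields the asserted bound $|K|\rho_G\,\xi/(1-\xi)$; in particular $T_{0,K}^{\mathrm{st}}<\infty$ almost surely. (Almost sure finiteness also follows directly from the Borel--Cantelli lemma, since $\sum_{m\ge1}\mathrm P(W_0^{\mathrm{st},(m)}\cap K\neq\emptyset)\le\sum_{m\ge1}|K|\rho_G\,\xi^{\,m}<\infty$.)

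The step I expect to be the main obstacle is the Campbell--Palm bookkeeping. One must verify that under the Palm distribution of $W_0^{\mathrm{st},(m)}$ the forward reproduction from the tagged point is genuinely independent of its ancestry, and keep precise track of how the two independent thinnings (the $p$-thinning of the offspring and the $q$-retention of the ancestor) together contribute the factor $\xi=\beta p+q$ at each generation. Moreover, to reach the sharp constant $\xi/(1-\xi)$ rather than a cruder one, the event $\{T_{0,K}^{\mathrm{st}}\ge m\}$ should be charged to the responsible $m$-th generation point as above instead of being union-bounded over all later generations; some care is also needed with retained copies of a point, which may belong to $W_{0,x}^{(m)}$ for several values of $m$.
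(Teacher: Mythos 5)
Your overall strategy differs from the paper's. The paper bounds $T_{0,K}^{\mathrm{st}}$ \emph{pathwise} by the total count $N=\sum_{x\in G_0^{\mathrm{st}}}\#(F_{0,x}\cap K)$ of family points landing in $K$, computes $\mathrm E(N)=|K|\rho_G(\beta p+q)/(1-\beta p-q)$ by Campbell's theorem, and concludes from $\mathrm P(T_{0,K}^{\mathrm{st}}\le N)=1$; the stated constant comes entirely from that pointwise domination. You instead sum the tail probabilities $\mathrm P(T_{0,K}^{\mathrm{st}}\ge m)$, bounding each one by a first-moment/Campbell--Palm argument on the generation-$m$ population $W_0^{\mathrm{st},(m)}$. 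Your intensity computation $\mathrm E\{\#(W_0^{\mathrm{st},(m)}\cap A)\}=|A|\rho_G(\beta p+q)^m$ is correct and consistent with the paper's bookkeeping in Appendix~\ref{a:2}.

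The gap is in the final constant, exactly where you flagged trouble. Write $\xi=\beta p+q$. Your Markov/Campbell--Palm step gives $\mathrm P(T_{0,K}^{\mathrm{st}}\ge m)\le\rho_G\xi^m|K|\,\mathrm E(\#R)$, and the clan $R$ must include its root (the point witnessing $T_{0,K}^{\mathrm{st}}\ge m$ may itself be the generation-$m$ point $y$), so $\mathrm E(\#R)=\sum_{j\ge0}\xi^j=1/(1-\xi)$, not $\xi/(1-\xi)$. Summing over $m\ge1$ then yields $\mathrm E(T_{0,K}^{\mathrm{st}})\le|K|\rho_G\,\xi/(1-\xi)^2$, weaker than the asserted $|K|\rho_G\,\xi/(1-\xi)$ by a factor $1/(1-\xi)$. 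No amount of constant-tracking within your scheme removes this factor: a clan that last meets $K$ at generation $m'$ is charged once for every $m\le m'$, i.e.\ $m'$ times in total, whereas the paper's $N$ charges it only once per point it places in $K$. So your argument does prove $\mathrm E(T_{0,K}^{\mathrm{st}})<\infty$ and hence $T_{0,K}^{\mathrm{st}}<\infty$ almost surely --- which is all that Theorem~\ref{t:main2} actually uses --- but it does not establish the inequality as stated. To recover the paper's constant you would need the pointwise domination of $T_{0,K}^{\mathrm{st}}$ by the total progeny count in $K$; be aware that this domination is itself delicate (a family may first enter $K$ only at a late generation, contributing a single point to $N$ while making $T_{0,K}^{\mathrm{st}}$ large), so adopting it requires an additional justification rather than a one-line appeal.
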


\begin{proof}
	Let $K\subset\mathbb R^d$ be compact and define
	\begin{equation*}
		N=\sum_{x\in G_0^{\mathrm{st}}}\#(F_{0,x}\cap K).
	\end{equation*}
	By the law of total expectation, conditioning on $G_0^{\text{st}}$ and using Campbell's theorem, we obtain
	\begin{align}
	\mathrm E(N)
	&=\rho_G\int\sum_{m=1}^\infty\int_K\big(\beta pf+q\delta_0\big)^{*m}(y-x)\,\mathrm dy\,\mathrm dx\nonumber\\
&=	
	\rho_G\int \sum_{m=1}^\infty \int_K\sum_{k=0}^{m}\binom{m}{k}q^{m-k}(\beta p)^k f^{*k}(y-x)\,\mathrm dy  \,\mathrm dx \nonumber\\
	&=|K|\rho_G\frac{\beta p + q}{1 - \beta p - q}\label{e:EN}
	\end{align}
	using Fubini's theorem in the last identity.  Further, the families
	initiated by the points in $G_0^{\mathrm{st}}$ are almost surely
	pairwise disjoint, so $N$ is almost surely the number of points in
	$K$ belonging to some family initiated by a point
	$x\in G_0^{\mathrm{st}}$. Consequently,
	$\mathrm P(T_{0,K}^{\mathrm{st}}\le N)=1$, whereby the lemma
	follows.
\end{proof}

We are now ready to state our second main result.
\begin{theorem}\label{t:main2}
	Suppose $\ldots,Z_{-1},Z_0,Z_{1}\ldots$ are IID stationary point
	processes and $\beta_n=\beta$, $\nu_n=\nu$, $f_n=f$, $p_n=p$, $q_n = q$, and
	$\rho_{Z_n}=\rho_{Z}$ do not depend on $n\ge1$, where $\beta p + q<1$
	and $\rho_{Z}>0$.  Then
	$\ldots,G_{-1}^{\mathrm{st}},G_0^{\mathrm{st}},G_1^{\mathrm{st}},\ldots$
	is a time-stationary Markov chain constructed in accordance to items
	(a)--(d). Let $\Pi$ be the distribution of any $G_n^{\mathrm{st}}$
	and let $\mathcal N$ be the space of all locally finite subsets of
	$\mathbb R^d$. Then there exists a (measurable) subset
	$\Omega\subseteq\mathcal N$ so that $\Pi(\Omega)=1$ and for any
	compact set $K\subset\mathbb R^d$ and all $\omega\in\Omega$,
	conditional on $G_0=\omega$, there is a coupling between
	$G_1,G_2,\ldots$ and
	$\ldots,G_{-1}^{\mathrm{st}},G_0^{\mathrm{st}},G_1^{\mathrm{st}},\ldots$,
	and there exists a random time $T_K(\omega)\in\{0,1,\ldots\}$ so
	that $G_n\cap K=G_n^{\mathrm{st}}\cap K$ for all integers
	$n> T_K(\omega)$.  In particular, for any $\omega\in\Omega$ and
	conditional on $G_0=\omega$,
	$G_n$ converges in distribution to $\Pi$ as $n\rightarrow\infty$,
	and so $\Pi$ is the unique invariant distribution of the chain
	$G_0,G_1,\ldots$.
\end{theorem}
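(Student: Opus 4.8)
The plan is to build an explicit coupling in which the ``far past'' of the stationary chain is discarded once it can no longer influence a fixed compact window. First I would fix a realization $\omega\in\mathcal N$ of $G_0$ and a compact set $K\subset\mathbb R^d$; the target is to show that after finitely many steps the restriction of $G_n$ to $K$ agrees with that of $G_n^{\mathrm{st}}$. The natural coupling is to use the \emph{same} noise processes $Z_1,Z_2,\ldots$ and the \emph{same} reproduction randomness (clusters, $p$-thinnings, $q$-retentions) in both chains for all times $n\ge1$, so that the two chains differ only through their ``initial conditions'': $G_0^{\mathrm{st}}$ carries families $F_{n,x}$ initiated by $x\in Z_n$ for all integers $n\le 0$, whereas $G_0=\omega$ is an arbitrary deterministic configuration. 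Decompose each $G_n$ ($n\ge 1$) as the superposition of (i) the descendants at generation $n$ of the points of $\omega=G_0$ under the shared reproduction mechanism, and (ii) the descendants at generation $n$ of the points of $Z_1,\ldots,Z_n$; decompose $G_n^{\mathrm{st}}$ analogously, with (i) replaced by the generation-$n$ descendants of $W_0^{\mathrm{st}}=\bigcup_{m\ge1}\bigcup_{x\in Z_{-m}}W_{-m,x}^{(m)}$, i.e.\ of $G_0^{\mathrm{st}}\setminus Z_0$ together with the contribution of $Z_0$ itself. Because the reproduction randomness is shared, part (ii) is literally identical in the two chains. Hence $G_n\cap K$ and $G_n^{\mathrm{st}}\cap K$ can differ only through the part-(i) descendants, i.e.\ through points of $K$ that at generation $n$ still descend from $G_0$ on one side and from $G_0^{\mathrm{st}}$ on the other.

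The key finiteness input is Lemma~\ref{l:3}. On the stationary side, $T_{0,K}^{\mathrm{st}}$ as defined in \eqref{e:T0Kst} is a.s.\ finite by that lemma, so there is a (random) last generation after which no point of $K$ descends from $G_0^{\mathrm{st}}$; for $n$ beyond that time, part (i) of $G_n^{\mathrm{st}}$ contributes nothing to $K$. On the non-stationary side I would run the analogous argument with $\omega$ in place of $G_0^{\mathrm{st}}$: define $T_{0,K}(\omega)=\sup\{m\ge1: W_{0,x}^{(m)}\cap K\neq\emptyset\text{ for some }x\in\omega\}$ using the shared reproduction randomness. For $\Pi$-almost every $\omega$ this is finite — indeed, repeating the Campbell/Fubini computation \eqref{e:EN} conditionally on $G_0=\omega$ gives $\mathrm E(N\mid G_0=\omega)=\rho(\omega,K)\cdot\frac{\beta p+q}{1-\beta p-q}$ with $\rho(\omega,K)$ a linear functional of $\omega$ that is $\Pi$-a.s.\ finite (since $\Pi$ has finite intensity), so the corresponding counting variable $N$ is a.s.\ finite and $T_{0,K}(\omega)\le N$. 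Let $\Omega$ be the set of $\omega$ for which $T_{0,K}(\omega)<\infty$ for every compact $K$ in a fixed countable exhausting family (e.g.\ closed balls of integer radius); then $\Pi(\Omega)=1$, and for a general compact $K$ choose a ball from the family containing it. Setting $T_K(\omega)=\max\{T_{0,K}^{\mathrm{st}},\,T_{0,K}(\omega)\}$, for every $n>T_K(\omega)$ the only parts of $G_n$ and $G_n^{\mathrm{st}}$ that meet $K$ are the identical part-(ii) descendants, so $G_n\cap K=G_n^{\mathrm{st}}\cap K$, which is the claimed coupling.

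Finally, convergence in distribution and uniqueness of the invariant law follow by a standard argument. Since for each compact $K$ we have $\mathrm P(G_n\cap K=G_n^{\mathrm{st}}\cap K)\to1$ and $G_n^{\mathrm{st}}\sim\Pi$, the restriction of $G_n$ to $K$ converges in total variation to that of $\Pi$; letting $K$ exhaust $\mathbb R^d$ along the countable family and using that the law of a locally finite point process is determined by its restrictions to such $K$, we get $G_n\xrightarrow{d}\Pi$ for every $\omega\in\Omega$, hence (averaging over $G_0\sim$ any initial law, noting $\Pi(\Omega)=1$ handles the case $G_0\sim\Pi$ as well) for the chain started from any initial distribution. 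Time-stationarity and the Markov property of $(G_n^{\mathrm{st}})$ are immediate from the construction: the families $F_{n,x}$ are IID given the noise, the noise is IID, and \eqref{e:statext} is manifestly equivariant under integer time shifts; this forces $\Pi$ to be invariant, and the convergence statement forces it to be the \emph{unique} invariant law. The main obstacle is the bookkeeping in the decomposition step — making precise that, under the shared randomness, the descendants of the ``noise injected at times $1,\dots,n$'' really do coincide in the two chains while the descendants of the two different initial conditions are the only source of discrepancy in $K$ — and verifying that the exceptional $\Pi$-null set can be chosen uniformly over a countable family of compacts so that a single $\Omega$ works for all $K$.
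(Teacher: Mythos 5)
Your proposal follows essentially the same route as the paper's proof: the same coupling through shared noise processes and shared reproduction randomness for common ancestors, the same appeal to Lemma~\ref{l:3} to obtain an almost surely finite time after which the time-$0$ configurations no longer influence the window $K$, and the same conclusion by convergence of the restrictions to compact sets (you use total variation of restrictions, the paper uses void probabilities — an inessential difference). If anything you are slightly more careful than the printed proof, taking the maximum of the two extinction times $T_{0,K}^{\mathrm{st}}$ and $T_{0,K}(\omega)$ so that the claimed equality $G_n\cap K=G_n^{\mathrm{st}}\cap K$ genuinely holds, and building $\Omega$ from a countable exhausting family of compacts so that a single $\Pi$-null set works for every $K$, points the paper glosses over.
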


\begin{proof}
	Obviously,
	$\ldots,G_{-1}^{\mathrm{st}},G_0^{\mathrm{st}},G_1^{\mathrm{st}},\ldots$
	is a time-stationary Markov chain con\-struc\-ted in accordance to
	items (a)--(d). To verify the remaining part of the theorem, we may
	assume that $G_0$ and $G_0^{\mathrm{st}}$ are independent. Then,
	conditional on $G_0$, we have a coupling between $G_1,G_2,\ldots$
	and
	$\ldots,G_{-1}^{\mathrm{st}},G_0^{\mathrm{st}},G_1^{\mathrm{st}},\ldots$
	because $G_1^{\mathrm{st}},G_2^{\mathrm{st}},\ldots$ and
	$G_1,G_2,\ldots$ are generated by the same noise processes
	$Z_1,Z_2,\ldots$, the same offspring processes $Y_{n,x}$ for all
	times $n=1,2,\ldots$ and all ancestors
	$x\in G_{n-1}\cap G_{n-1}^{\mathrm{st}}$, the same Bernoulli
	variables $B_{n,y}$ for all times $n=1,2,\ldots$ and all offspring
	$y\in Y_{n,x}$ with ancestor
	$x\in G_{n-1}\cap G_{n-1}^{\mathrm{st}}$, and the same Bernoulli
		variables $Q_{n,x}$ for all times $n=1,2,\ldots$ and all retained points
		$x\in G_{n-1}\cap G_{n-1}^{\mathrm{st}}$. 
	Let $K\subset\mathbb R^d$
	be compact. In accordance with \eqref{e:T0Kst}, for $\omega\in \mathcal{N}$, let
	\begin{equation*}
		T_{K}(\omega)=\sup\bigl\{m\in\{1,2,\ldots\}:W_{0,x}^{(m)}\cap
		K\not=\emptyset\text{ for some }x\in\omega\bigr\}
	\end{equation*}
	be the last time a point in $K$ is a member of a family initiated by
	some point in $\omega$, and let
	$\Omega=\{\omega\in\mathcal N:T_{K}(\omega)<\infty\}$. By
	Lemma~\ref{l:3} and the coupling construction, $\Pi(\Omega)=1$ and
	$G_n\cap K = G_n^{\mathrm{st}}\cap K$ whenever $n > T_K(\omega)$, so
	for any $\omega\in\Omega$,
	\begin{align*}
		\lim_{n\rightarrow\infty}\mathrm P\left(G_n\cap
		K=\emptyset|G_0=\omega\right)= \lim_{n\rightarrow\infty}\mathrm
		P\left(G_n^{\mathrm{st}}\cap K=\emptyset, n>T_K(\omega)\right)
	\end{align*}
	because $G_0$ is independent of
	$(G_0^{\mathrm{st}},T_K(\omega))$. Since the sequence of events
	$\{\omega:1>T_K(\omega)\}\subseteq
	\{\omega:2>T_K(\omega)\}\subseteq\ldots$ increases to $\Omega$, we
	obtain
	\begin{equation*}
		\lim_{n\rightarrow\infty}\mathrm P\left(G_n\cap K=\emptyset|G_0=\omega\right)
		=\lim_{n\rightarrow\infty}\mathrm P\left(G_n^{\mathrm{st}}\cap K=\emptyset\right)
		=\mathrm P\left(G_0^{\mathrm{st}}\cap K=\emptyset\right).
	\end{equation*}
	Thus, recalling that the distribution of a random closed set
	$X\subseteq\mathbb R^d$ (e.g.\ a locally finite point process)
	is uniquely characterized by the void probabilities
	$\mathrm P(X\cap K=\emptyset)$ for all compact sets
	$K\subset\mathbb R^d$, we have verified that conditional on
	$G_0=\omega$, the chain $G_1,G_2\ldots$ converges in distribution
	towards $\Pi$. In turn, this implies uniqueness of the invariant
	distribution $\Pi$.
\end{proof}

In Theorem~\ref{t:main2}, under mild conditions, we can take
$\Omega=\mathcal N$. For instance, this is easily seen to be the case
if there exists $\varepsilon>0$ so that $f(x)>0$ whenever
$\|x\|\le\varepsilon$. In the special case $c=0$, $\Pi$ is just a
stationary Poisson process, and so $\Omega=\mathcal N$. Moreover, the
integral
\begin{align*}
	\gamma := \int(g_G-1)
\end{align*}
is a rough measure of the amount of positive/negative association
between the points in $G_n^{\mathrm{st}}$. Note that comparing
$\gamma$ with the corresponding measure for another stationary point
process makes only sense if the processes have equal intensities, see
\cite{LMR15}.  Under the assumptions in both Theorem~\ref{t:main} and
\ref{t:main2}, by \eqref{e:PCFequil},
\begin{align*}
\gamma &=\frac{c(\beta p)^2+2\beta pq}{\rho_G\left\{1-(\beta p+q)^2\right\}}
+\frac{b\rho_Z^2}{\rho_G^2\left\{1-(\beta p+q)^2\right\}} \\
&=\frac{1}{1+\beta p +q}    
\left\{\frac{c(\beta p)^2+2\beta pq}{\rho_Z}+b(1-\beta p-q)\right\}
\end{align*} 
which does not depend on $f$ or $f_Z$. Furthermore, $\gamma$ may take
any positive value and some negative values depending on how we choose
the values of the parameters. 
This means we may have an
equilibrium distribution exhibiting any degree of clustering or some
degree of regularity. In fact, $\gamma$ can only be negative when $b$
is negative, e.g\ when $Z_n$ is a determinantal point process. In this
case $b$ has a lower bound, $b_{\textup{min}}$, that ensures the existence
of the determinantal point process \cite{LMR15} and consequently,
$\gamma\ge b_{\textup{min}}$. The case $\gamma = b_{\textup{min}}$ happens
exactly when $\beta p + q= 0$ (i.e., when offspring are never produced or no points are retained after the thinning procedures in items (b) and (c)) and thus $G_n = Z_n$ is a determinantal
point process.

For approximate simulation of $G_0^{\textup{st}}$ under each of the three
models of the noise processes, we use the algorithm described in
Appendix~\ref{a:3}.  Simulation was initially done with parameters and
set-up corresponding to that of Figure~\ref{f:pcfslim}.  However, the
resulting point patterns were not distinguishable from a stationary
Poisson process when comparing empirical estimates of the PCF,
$L$-function, or $J$-function of the simulations to 95\% global rank
envelopes under each model (for definition of $L$- and $J$-functions,
see e.g.\ \cite{MW2004}, and for the envelopes, see
\cite{Myllymaki:16}).  Therefore, in order to better distinguish the
three models, we consider two cases as follows.
\begin{description}[style=nextline]
	\item[Case 1:] This case is based on minimizing $\gamma$ under
	determinantal noise and on maximizing $\gamma$ under weighted
	permanental noise. Let $d=2$, $f\sim N_d(\sigma^2)$, with
	$\sigma = 0.1$, $f_Z\sim N_d(\kappa^2/8)$, $\rho_G = 100$, $p = 1$,
	$\beta = 0.3$, $q=0$, and consequently $\rho_Z = 70$.
	\begin{itemize}
		\item In case of determinantal noise: Let
		$\kappa = 1/\sqrt{\rho_Z\pi}$ (the most repulsive Gaussian
		determinantal point process) and the number of points in a cluster
		be Bernoulli distributed with parameter $\beta$, implying $c=0$
		(each point has at most one offspring). Then
		$\gamma\approx -5.38\times10^{-3}$.
		\item In case of Poisson noise: Let the number of points in a
		cluster be Poisson distributed with intensity $\beta$, implying
		$c=1$. Then $\gamma\approx 9.89\times10^{-4}$.
		\item In case of weighted permanental noise: Let $\kappa = 1$ and
		the number of points in a cluster be negative binomially
		distributed with probability of success equal to 0.12 and
		dispersion parameter equal to 0.11, implying $c=10$.  Then
		$\gamma\approx 3.39$.
	\end{itemize}
	\item[Case 2:] This case is such that the clusters are more separated.
	Let $d=2$, $f\sim N_d(\sigma^2)$, with $\sigma = 0.01$,
	$f_Z\sim N_d(\kappa^2/8)$, $\rho_G = 100$, $p = 1$, $\beta = 0.95$, $q=0$,
	and consequently $\rho_Z = 5$. Also, let the number of points in a
	cluster be negative binomially distributed with probability of
	success equal to 0.208 and dispersion parameter equal to 0.25,
	implying $c=5$.
	\begin{itemize}
		\item In case of determinantal noise: Let
		$\kappa = 1/\sqrt{\rho_Z\pi}$. Then $\gamma\approx 0.463$.
		\item In case of Poisson noise: $\gamma\approx 0.463$.
		\item In case of weighted permanental noise: Let $\kappa = 1$. Then
		$\gamma\approx 0.624$.
	\end{itemize}
\end{description}

Figure~\ref{f:limsim} shows simulations of $G_0^{\textup{st}}$ under
each of the three models of the noise processes (left to right) in
Case 1 and 2 (top and bottom).  Based on these simulations,
Figure~\ref{f:limsim_summaries} shows empirical estimates of
functional summary statistics based on the simulated point patterns
from Figure~\ref{f:limsim} along with 95\% global rank envelopes based
on 2499 simulations (as recommended in \cite{Myllymaki:16}) of a
stationary Poisson process with the same intensity as used in
Figure~\ref{f:limsim}.  The first simulated point pattern of Case 1
looks slightly less clustered than the second, whilst the last looks
more clustered.  This is in accordance with the values of $\gamma$ and
the corresponding functional summary statistics in
Figure~\ref{f:limsim_summaries}.  Additionally,
Figure~\ref{f:limsim_summaries} reveals that the case of Poisson noise
is not distinguishable from the stationary Poisson process, while the
case of weighted permanental noise is more clustered. The case of
determinantal noise is not distinguishable from the stationary Poisson
process by the PCF or $L$-function, but is shown to be more regular by
the $J$-function.  In Case 2, the clusters of the point pattern
simulated under determinantal noise looks more separated than the
clusters of the point pattern simulated under Poisson noise. The
clusters of the point pattern simulated under weighted permanental
noise are clustered to such a degree that it gives the illusion of few
highly separated clusters.  All three models of Case 2 are as expected
significantly different from the stationary Poisson process.

\begin{figure}[ht]
	\centering
	\begin{minipage}{.33\linewidth}
		\includegraphics[width=\linewidth]{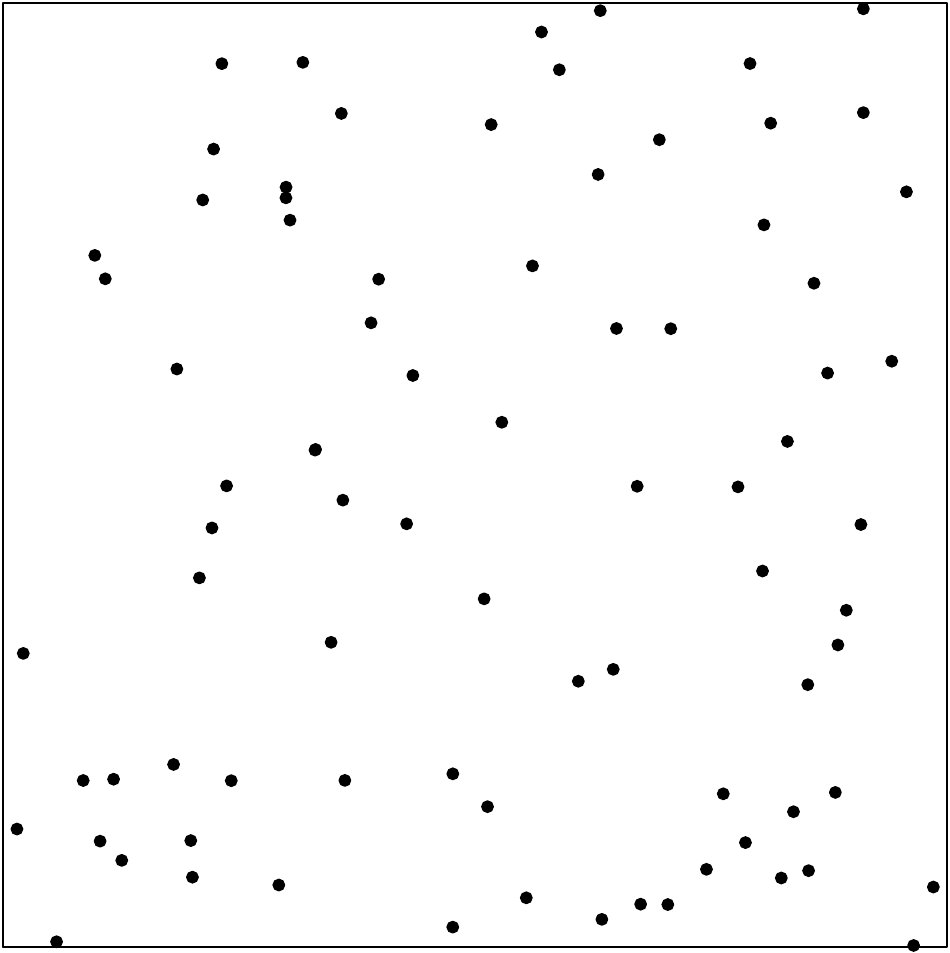}
	\end{minipage}%
	\begin{minipage}{.33\linewidth}
		\includegraphics[width=\linewidth]{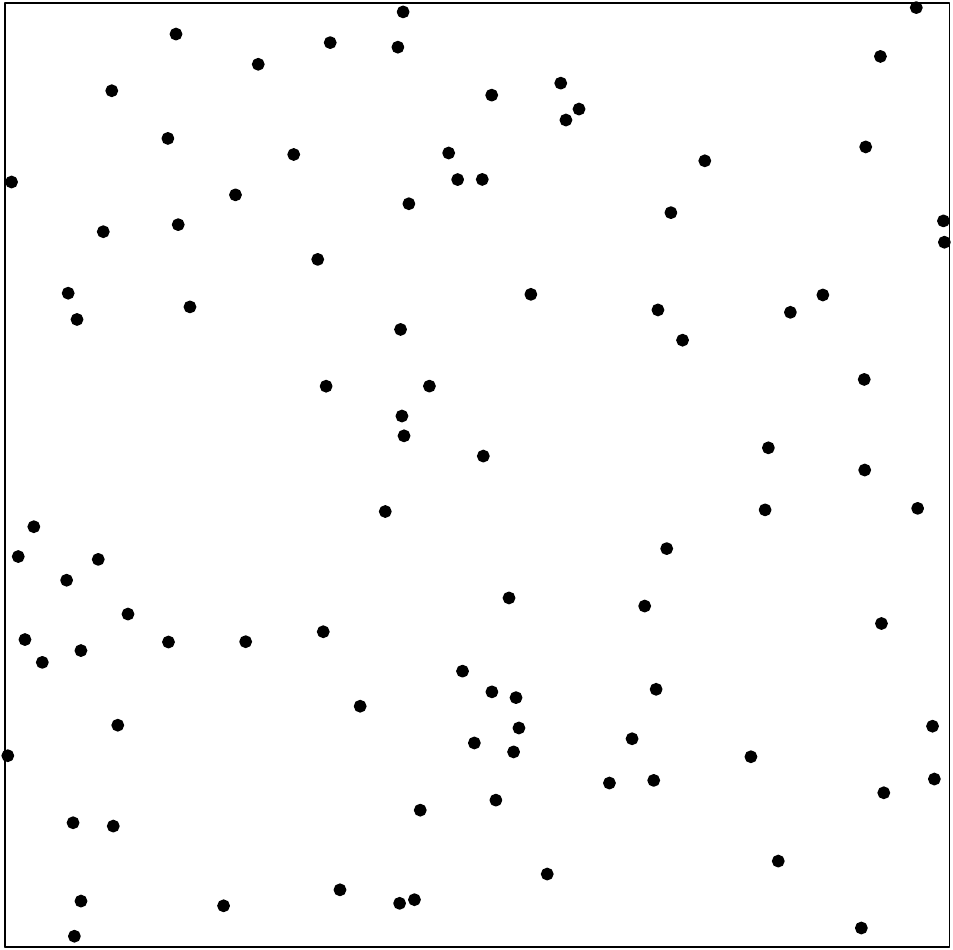}
	\end{minipage}%
	\begin{minipage}{.33\linewidth}
		\includegraphics[width=\linewidth]{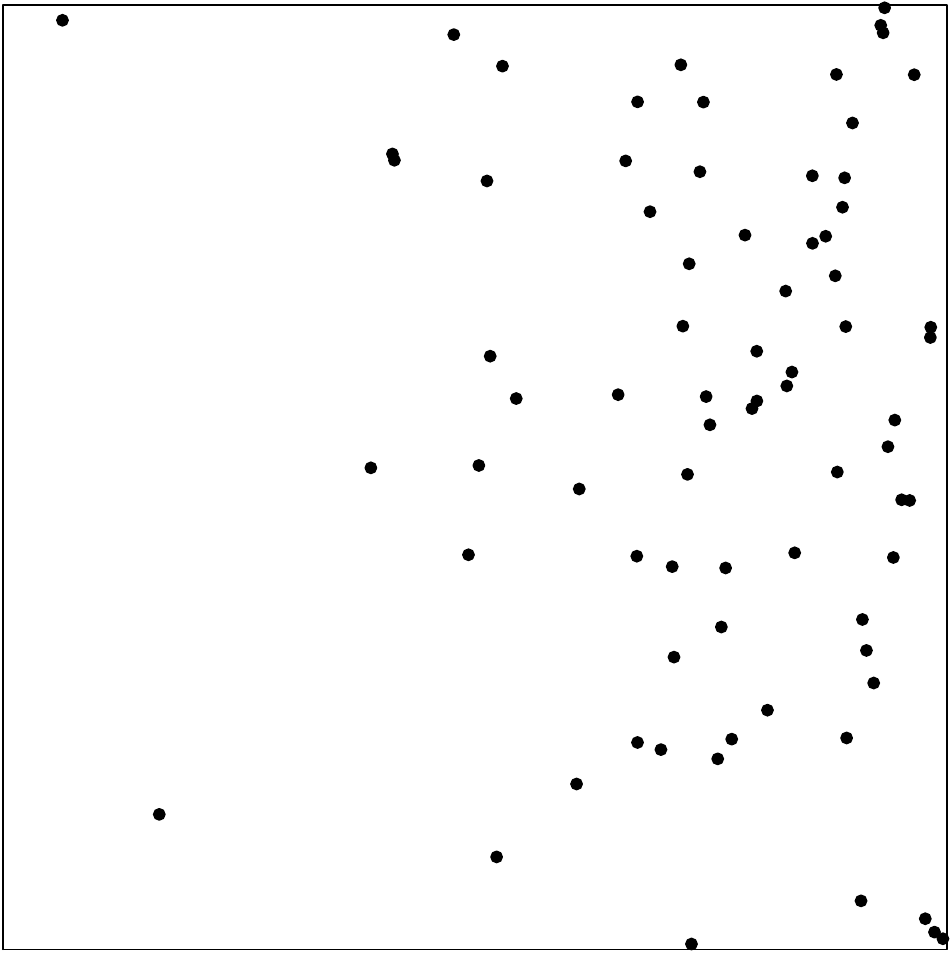}
	\end{minipage}%
	\\
	\begin{minipage}{.33\linewidth}
		\includegraphics[width=\linewidth]{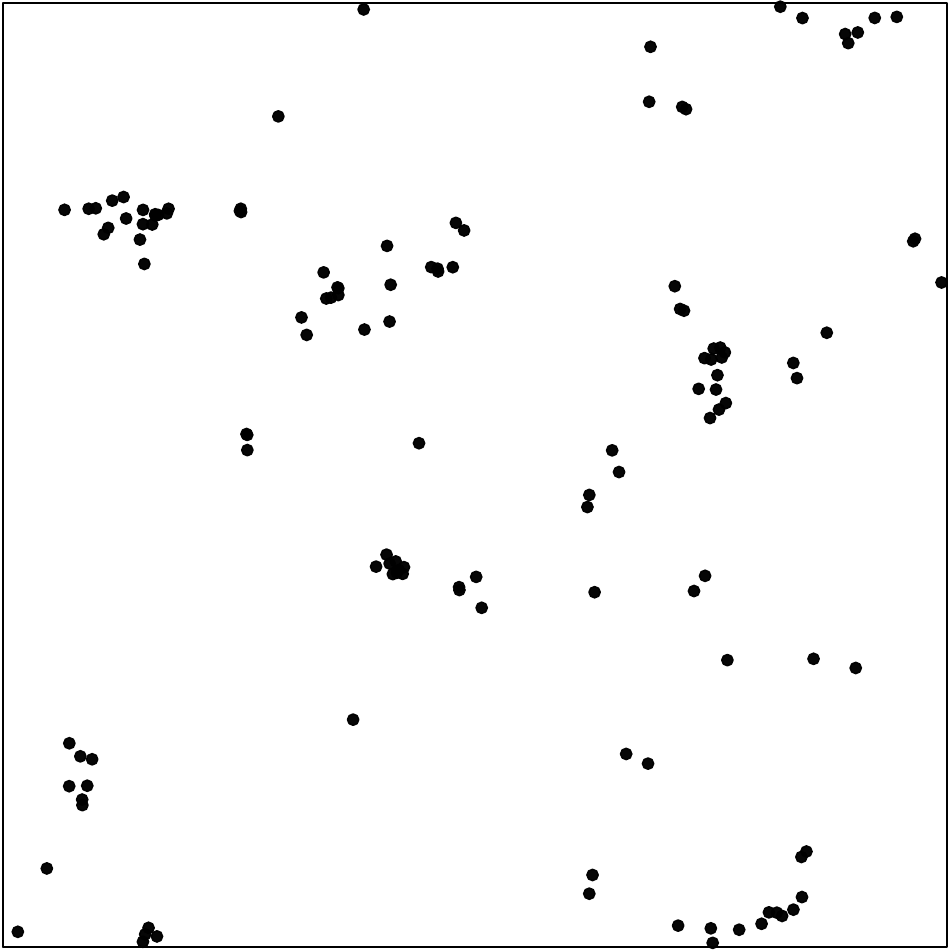}
	\end{minipage}%
	\begin{minipage}{.33\linewidth}
		\includegraphics[width=\linewidth]{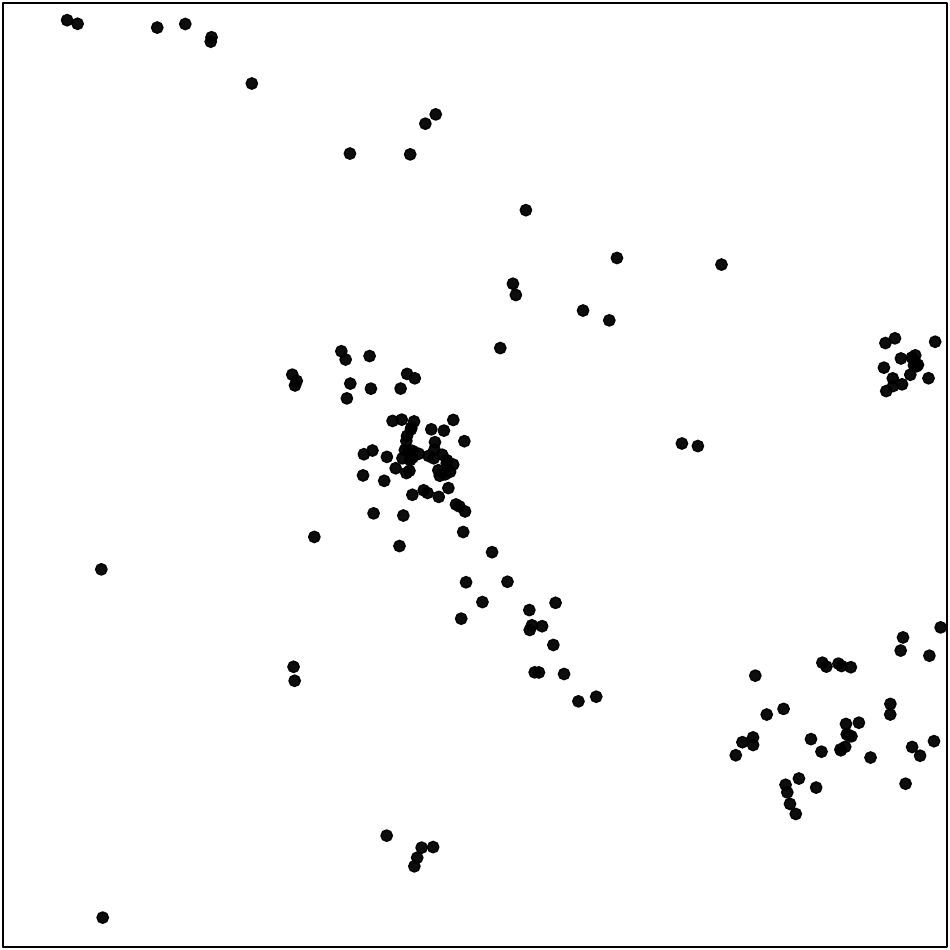}
	\end{minipage}%
	\begin{minipage}{.33\linewidth}
		\includegraphics[width=\linewidth]{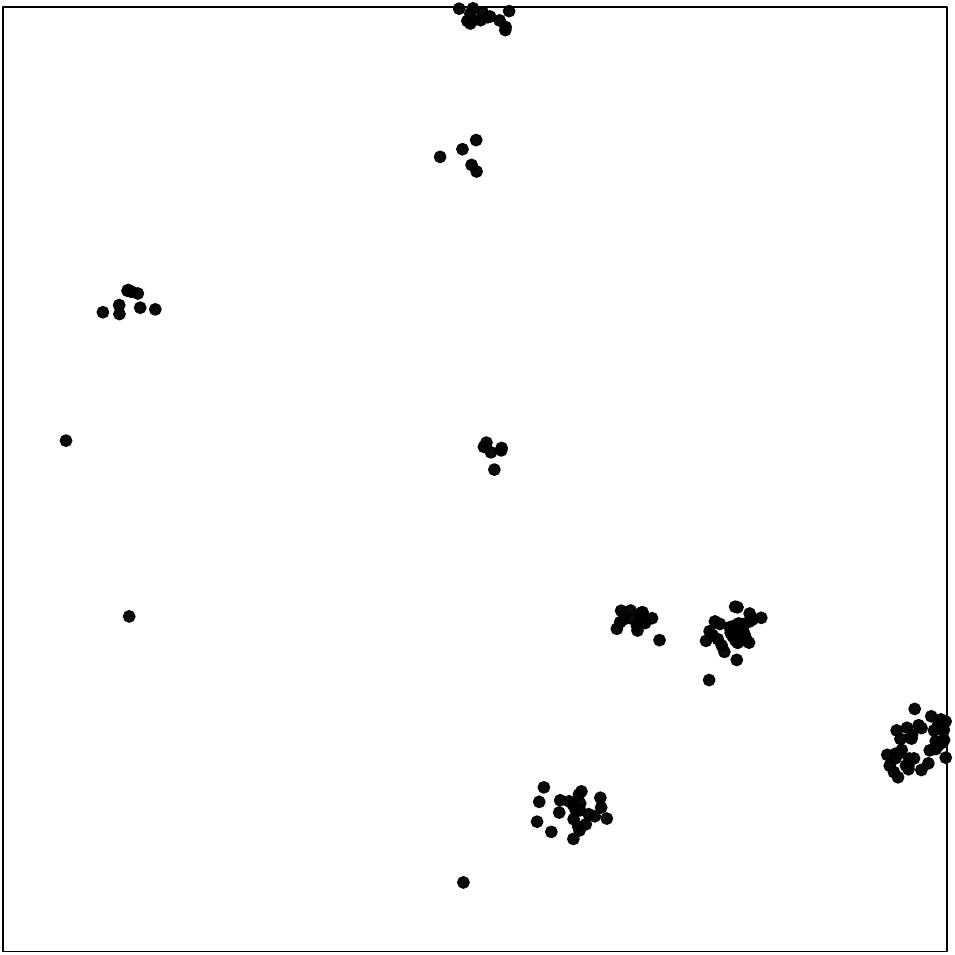}
	\end{minipage}%
	\caption{Simulations of $G_0^{\textup{st}}$ restricted to a unit
		square when the noise processes are either determinantal (left
		panel), Poisson (middle panel), or weighted permanental (right
		panel) point processes, with parameters as specified in the
		text. The rows corresponds to Case~1 and 2, respectively.}
	\label{f:limsim}
\end{figure}

\begin{figure}[ht]
	\centering
	\begin{minipage}{.33\linewidth}
		\includegraphics[width=\linewidth]{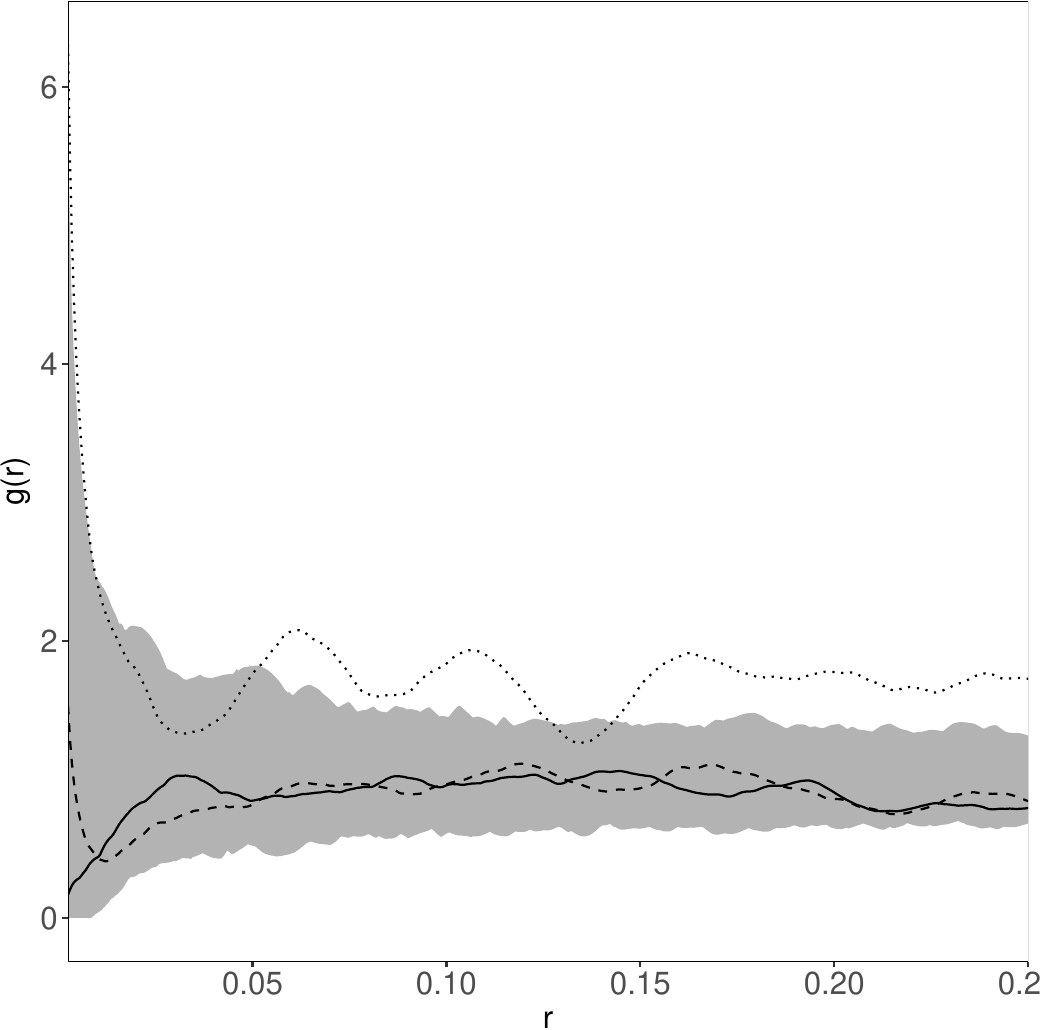}
	\end{minipage}%
	\begin{minipage}{.33\linewidth}
		\includegraphics[width=\linewidth]{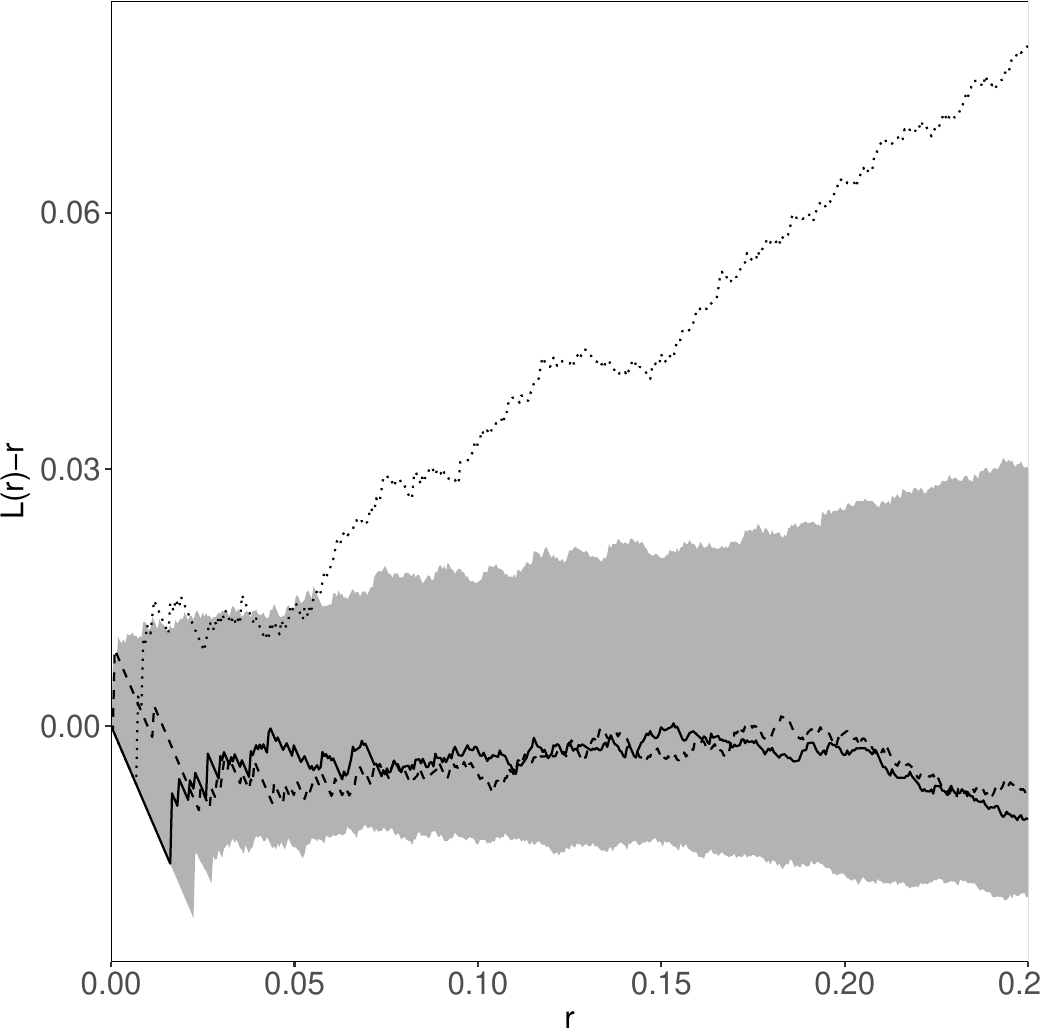}
	\end{minipage}%
	\begin{minipage}{.33\linewidth}
		\includegraphics[width=\linewidth]{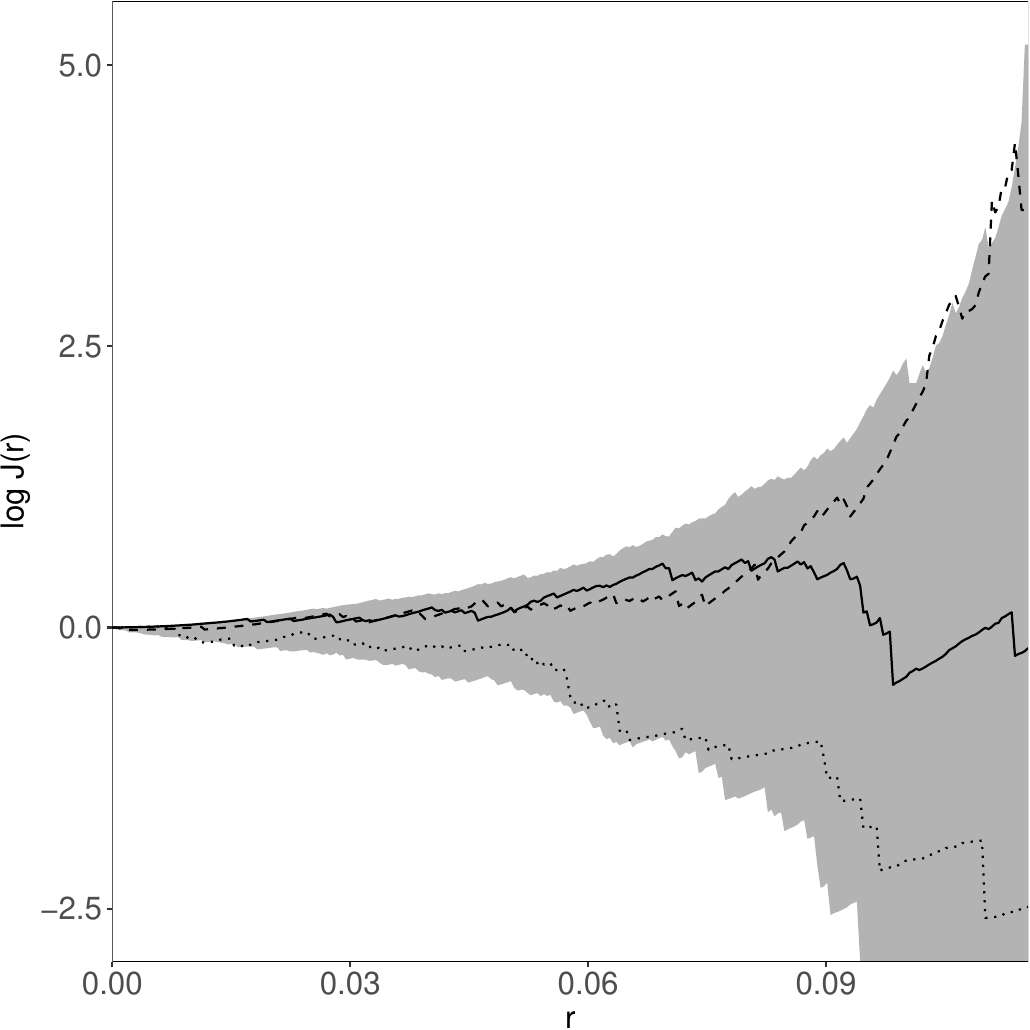}
	\end{minipage}%
	\\
	\begin{minipage}{.33\linewidth}
		\includegraphics[width=\linewidth]{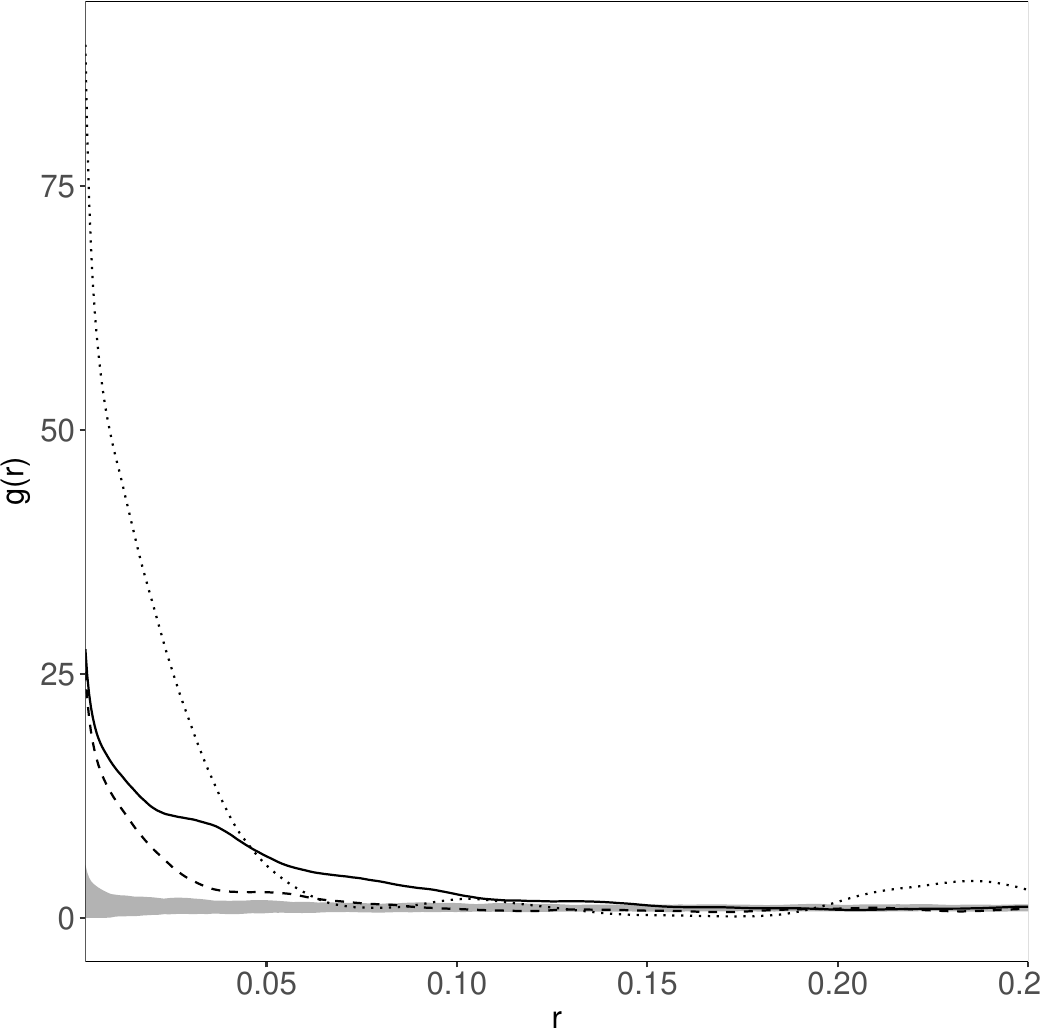}
	\end{minipage}%
	\begin{minipage}{.33\linewidth}
		\includegraphics[width=\linewidth]{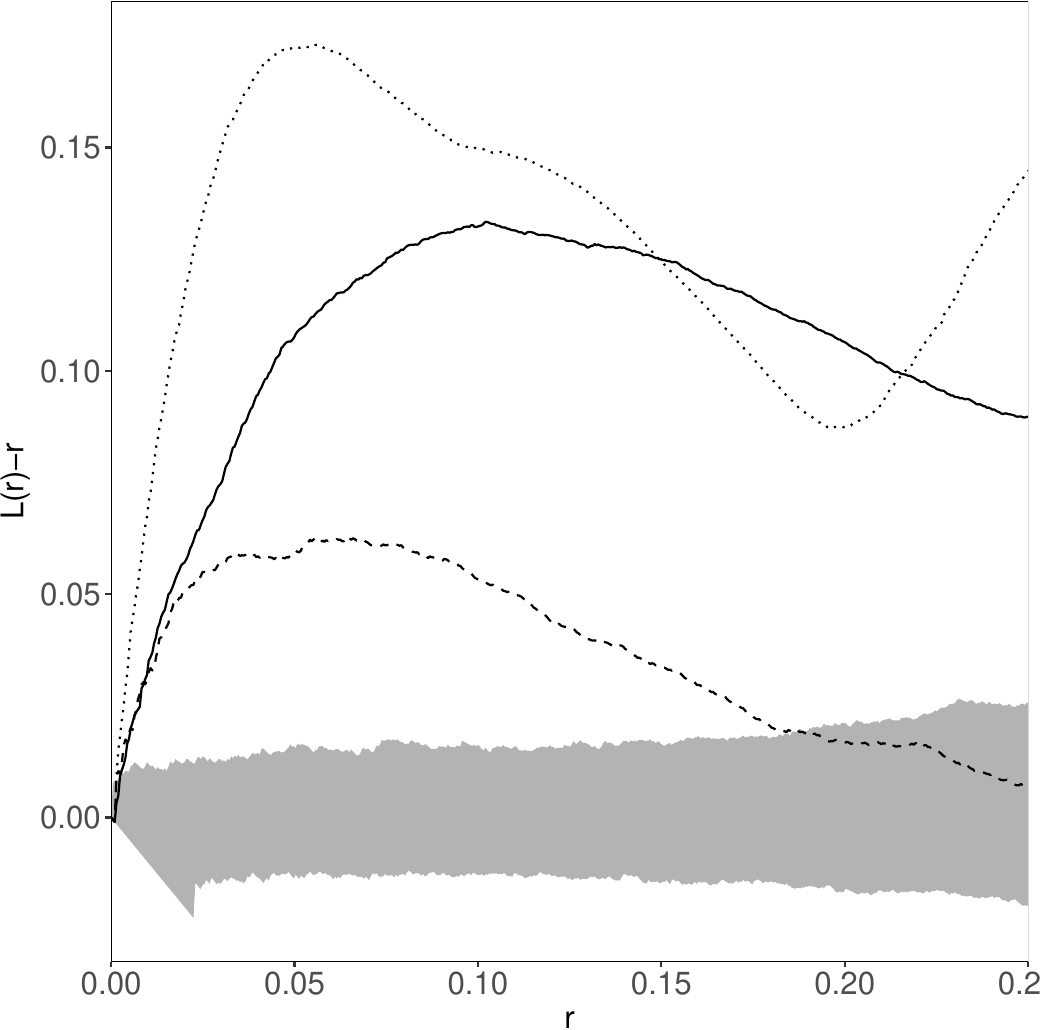}
	\end{minipage}%
	\begin{minipage}{.33\linewidth}
		\includegraphics[width=\linewidth]{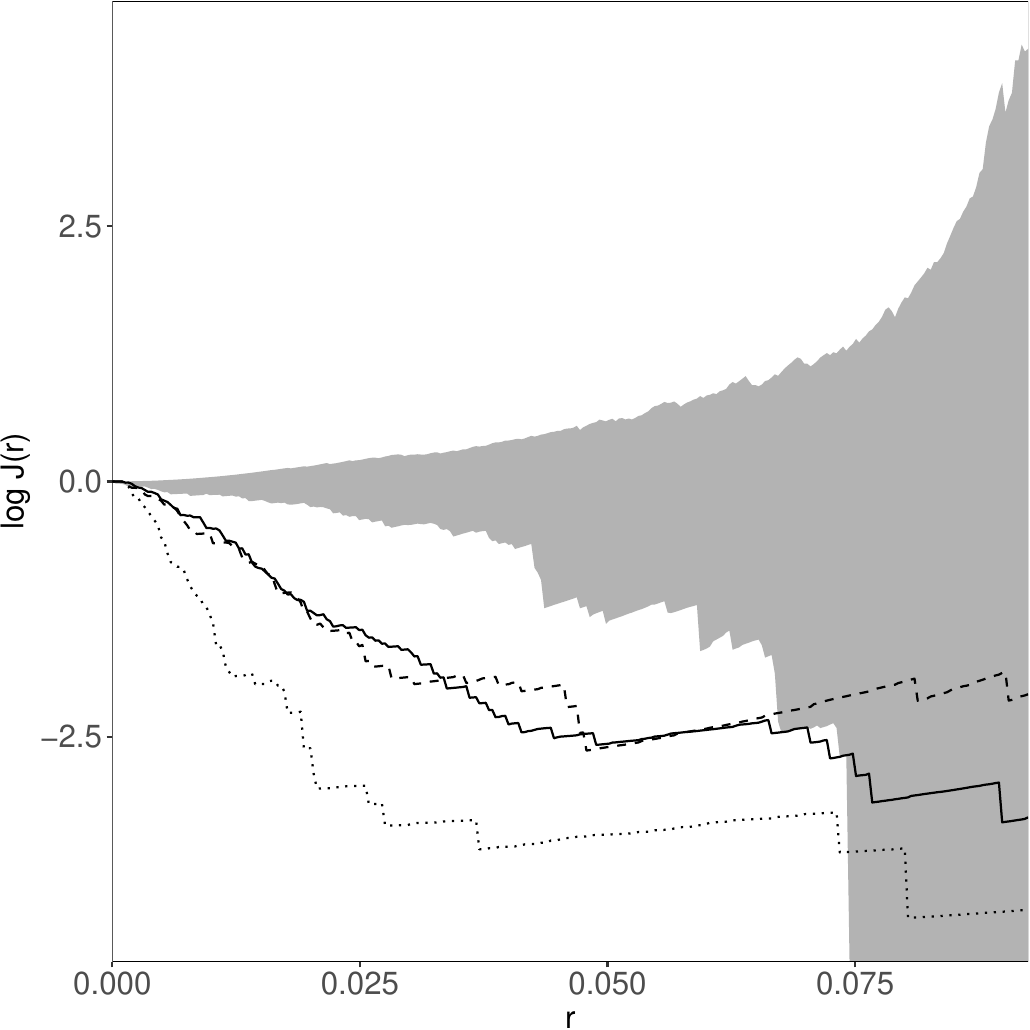}
	\end{minipage}%
	\caption{Empirical PCFs, $L$-functions, and $J$-functions (left to
		right) based on the simulations of $G_0^{\textup{st}}$ from
		Figure~\ref{f:limsim} when the noise processes are either
		determinantal (dashed), Poisson (solid), or weighted permanental
		(dotted). The rows corresponds to Case~1 and 2, respectively. The
		grey regions are 95\% global rank envelopes based on 2499
		simulations of a stationary Poisson process with the same
		intensity as $G_0^{\textup{st}}$.}
	\label{f:limsim_summaries}
\end{figure}

\appendix

\section{Weighted determinantal and permanental \\point processes}\label{a:1}
When defining stationary weighted determinantal/permanental point
processes, the main ingredients are a symmetric function
$C:\mathbb R^d\to\mathbb R$ and a real number $\alpha$. Before
giving the definitions of these point processes we recall the
following.

For a real $n\times n$ matrix $A$ with $(i,j)$-th entry $a_{i,j}$, the
$\alpha$-weighted permanent of $A$ is defined by
\begin{equation*}
	\per_\alpha (A)
	=\sum_\sigma\alpha^{\#\sigma}a_{1,\sigma_1}\cdots a_{n,\sigma_n}
\end{equation*}
where $\sigma$ denotes a permutation of $\{1,\ldots,n\}$ and
$\#\sigma$ is the number of its cycles. This is the usual permanent of
$A$ if $\alpha=1$. Moreover, the $\alpha$-weighted determinant of $A$
is given by
\begin{equation*}
	\det_\alpha (A)=\per_{-\alpha} (-A).
\end{equation*}
This is the usual determinant of $A$ if $\alpha=-1$. Often we just
write $\per_\alpha A$ for $\per_\alpha (A)$, and
$\det_\alpha A$ for $\det_\alpha (A)$.

For any $X_1,\ldots,X_n\in\mathbb R^d$, the $n\times n$ matrix with
$(i,j)$-th entry $C(X_i-X_j)$ is denoted by
$[C](X_1,\ldots,X_n)$. Thus
\begin{equation*}
	\per_\alpha [C](X_1,\ldots,X_n)
	= \sum_\sigma\alpha^{\#\sigma}C(X_1-X_{\sigma_1})\cdots C(X_n-X_{\sigma_n}).
\end{equation*}
Note that the weighted permanent/determinant can be negative if the
mapping $\mathbb R^d\times\mathbb R^d\ni (u,v)\to C(u-v)$ is not
positive semi-definite. When this mapping is positive semi-definite,
$C$ is an auto-covariance function, with corresponding
auto-correlation function $R(x)=C(x)/C(0)$ provided $C(0)>0$.

A locally finite point process $X\subset\mathbb R^d$ has $n$-th order
joint intensity $\rho_X^{(n)}$ for $n=1,2,\ldots$ if for any bounded
and pairwise disjoint Borel sets $A_1,\ldots,A_n\subset\mathbb R^d$,
\begin{equation*}
	\mathrm E\left[N(A_1)\cdots N(A_n)\right]
	=\int_{A_1}\cdots\int_{A_n}\rho_X^{(n)}\left(x_1,\ldots,x_n\right)\,\mathrm dx_1\cdots\,\mathrm dx_n<\infty.
\end{equation*}
Note that $\rho_X^{(n)}$ is unique except for a Lebesgue nullset in
$\mathbb R^{dn}$ (we ignore nullsets in the following).  Thus, if $X$
is stationary, $\rho_X^{(1)}$ is constant and agrees with the
intensity $\rho_X$, and $\rho_X>0$ implies that
$g_X(u-v)=\smash{\rho_X^{(2)}}(u,v)/\rho_X^2$ is the PCF.

If for all $n=1,2,\ldots$, the $n$-th order joint intensity exists and
is
\begin{equation*}
	\rho_X^{(n)}(X_1,\ldots,X_n)=\per_\alpha [C](X_1,\ldots,X_n)
\end{equation*}
we say that $X$ is a stationary $\alpha$-weighted permanental point
process with kernel $C$ and write $X\sim\mathrm{PPP}_\alpha(C)$.
Conditions are needed to ensure the existence of
$\mathrm{PPP}_\alpha(C)$, see \cite{shirai:takahash:03} and
\cite{McCullagh:Moeller:06}.  To exclude the trivial case where $X$ is
empty we assume $\alpha C(0)>0$.  Note that $C$ must be an
auto-covariance function, $\alpha>0$ since $\rho_X=\alpha C(0)$,
and
\begin{align}\label{e:pcfwPPP}
	g_X(x)-1=R(x)^2/\alpha.
\end{align}
This reflects that the process exhibits a positive association between
its points.  In fact, if $C$ is an auto-covariance function and
$k=2\alpha$ is a positive integer, then $X\sim\mathrm{PPP}_\alpha(C)$
exists and it is a Cox process: Conditional on IID zero-mean
stationary Gaussian processes $\Phi_1,\ldots,\Phi_k$ on $\mathbb R^d$
with auto-covariance function $C/2$, we can let $X$ be a Poisson
process with intensity function
$\Lambda(x)=\Phi_1(x)^2+\dots+\Phi_k(x)^2$, $x\in\mathbb R^d$.  In
particular, if $\alpha=1$, then $X$ is the boson process introduced by
\cite{Macchi:75}.

If for all $n=1,2,\ldots$, the $n$-th order joint intensity exists and
is
\begin{equation*}
	\rho_X^{(n)}(G_1,\ldots,G_n)=\det\nolimits_\alpha [C](G_1,\ldots,G_n)
\end{equation*}
we say that $X$ is a stationary $\alpha$-weighted determinantal point
process with kernel $C$ and write $X\sim\mathrm{DPP}_\alpha(C)$.  To
exclude the trivial case where $X$ is empty we assume $\alpha C(0)>0$.
Again $C$ needs to be an auto-covariance function, $\alpha>0$
since $\rho_X=\alpha C(0)$, and
\begin{equation}\label{e:pcfwDPP}
	g_X(x)-1=-R(x)^2/\alpha.
\end{equation}
If $\alpha=1$, then $X$ is the fermion process introduced by
\cite{Macchi:75} (it is usually called the determinantal point
process). We have the following existence result: If $C$ is continuous
and square integrable, existence of $X\sim\mathrm{DPP}_1(C)$ is
equivalent to the Fourier transform of $C$ being bounded by 0 and 1
\cite{LMR15}. When $\alpha$ is a positive integer,
$X\sim\mathrm{DPP}_\alpha(C)$ can be identified with the superposition
$G_1\cup\dots\cup G_\alpha$ of independent processes
$G_i\sim\mathrm{DPP}_\alpha(C/\alpha)$, $i=1,\ldots,\alpha$.  In
general, the process is not well-defined if $0<\alpha<1$, cf.\
\cite{McCullagh:Moeller:06}.

\section{The intensity and PCF of the invariant distribution}\label{a:2}
Let the situation be as in Theorem~\ref{t:main2}. Below we verify
\eqref{e:int_finitecluster} and \eqref{e:PCFequil} holds for $G_n^{\rm st}$ provided $g_G(u-v)$ is a locally integrable function of $(u,v)\in \mathbb{R}^d\times\mathbb{R}^d$.

Note that the $G_n^{\rm st}$ are identically distributed and
$G_0^{\mathrm{st}}=W_0^{\mathrm{st}}\cup Z_0$ where
$W_0^{\mathrm{st}}=\bigcup_{m=1}^\infty\bigcup_{x\in Z_{-m}}W_{-m,x}^{(m)}$,
cf.\ \eqref{e:statext}. Hence, for Borel sets $A\subseteq\mathbb R^d$
with $|A|<\infty$, using similar arguments as in the derivation of \eqref{e:EN}, we obtain
\begin{equation}\label{e:int-statW}
\mathrm E\{\#(W_0^{\mathrm{st}}\cap A)\}=
|A|\rho_Z\frac{\beta p + q}{1 - \beta p - q},
\end{equation}
so $W_0^{\textup{st}}$ has
intensity
\begin{equation}\label{e:int-W}
\rho_W=\rho_Z\frac{\beta p + q}{1 - \beta p - q}
\end{equation}
whereby it follows that $G_0^{\mathrm{st}}$ has intensity $\rho_G$ as
given by \eqref{e:int_finitecluster}.

Let $A_1,A_2\subseteq\mathbb R^d$ be disjoint Borel sets with
$|A_i|<\infty$, $i=1,2$. Using similar arguments as in the derivation of \eqref{e:EN} (or \eqref{e:int-statW})
 and exploiting the fact that $Z_0,Z_{-1},\ldots$
are IID point processes with a PCF of the form
$g_Z=1+bf_Z*\tilde{f}_Z$ as well as
the independence between $Z_0$ and $W_0^{\mathrm{st}}$, we obtain
\begin{align}
	\MoveEqLeft[3] \mathrm E\{\#(G_0^{\mathrm{st}}\cap
	A_1)\#(G_0^{\mathrm{st}}\cap A_2)\}\nonumber
	\\
	= {} &\rho_Z^2|A_1||A_2|
	+ \rho_Z^2\int_{A_1}\int_{A_2}bf_Z*\tilde{f}_Z(x_1-x_2)\, \dd x_1\,
	\dd x_2
	+2\rho_Z\rho_W|A_1||A_2|\label{e:ac:dddd1}
	\\
	&+\sum_{m_1=1}^\infty\sum_{m_2=1:\,m_1 \neq m_2}^\infty
	\rho_Z^2(\beta p + q)^{m_1+m_2}|A_1||A_2| \label{e:ac:dddd2}
	\\
	&
	\begin{aligned}
	&+\sum_{m=1}^\infty\rho_Z^2(\beta p + q)^{2m}|A_1||A_2|
	\\
	&+\sum_{m=1}^\infty\rho_Z^2\int_{A_1}\int_{A_2}bf_Z*\tilde{f}_Z\\
	&\hspace{5mm}*\sum_{k_1=0}^{m}\sum_{k_2=0}^{m}\binom{m}{k_1}\binom{m}{k_2}q^{2m-k_1-k_2}(\beta p)^{k_1+k_2}f^{*k_1}*\tilde{f}^{*k_2}(y_1-y_2)\,
	\dd y_1\,\dd y_2
	\end{aligned}
	\label{e:ac:dddd3}
	\\
	&+\sum_{m=1}^\infty\mathrm E\biggl\{\sum_{x\in Z_{-m}}\#(W_{-m,x}^{(m)}\cap
	A_1)\#(W_{-m,x}^{(m)}\cap A_2)\biggr\}. \label{e:ac:dddd4}
\end{align}
Here, 
\begin{itemize}
	\item the first two terms of \eqref{e:ac:dddd1} corresponds to pairs of
	points from $Z_0$ with one point falling in $A_1$ and the other in
	$A_2$;
	\item the third term corresponds to pairs of points either from
	$Z_0\cap A_1$ and $W_0^{\mathrm{st}}\cap A_2$ or from $Z_0\cap A_2$
	and $W_0^{\mathrm{st}}\cap A_1$;
	\item the term in \eqref{e:ac:dddd2} corresponds to pairs of points, with one point
	falling in $A_1$ and the other in $A_2$ of two families initiated by
	ancestors from different generations;
	\item the two terms in \eqref{e:ac:dddd3} corresponds to pairs of points, with one point
	falling in $A_1$ and the other in $A_2$
	from two different families initiated by ancestors from the same generation;
	\item the term in \eqref{e:ac:dddd4} corresponds to pairs of
	points from the same family, falling in $A_1$ and $A_2$, respectively.
\end{itemize}
Using \eqref{e:int_finitecluster} and \eqref{e:int-W}, we observe that
\eqref{e:ac:dddd1}--\eqref{e:ac:dddd3} simplify to
\begin{equation}\label{e:some_clever_name}
\begin{aligned}
\rho_G^2|A_1||A_2| &+ \sum_{m=0}^\infty\rho_Z^2
\int_{A_1}\int_{A_2}bf_Z*\tilde{f}_Z \\
&\hspace{-3mm}*\sum_{k_1=0}^{m}\sum_{k_2=0}^{m}\binom{m}{k_1}\binom{m}{k_2}q^{2m-k_1-k_2}(\beta p)^{k_1+k_2}f^{*k_1}*\tilde{f}^{*k_2}(y_1-y_2)\,
\dd y_1\,\dd y_2
\end{aligned}
\end{equation}
and the term in \eqref{e:ac:dddd4} is equal to
\begin{equation}\label{e:dddd3}
	\begin{aligned}
		\rho_Z\sum_{m=1}^{\infty}\iiiint\!\!\!\int_{A_1}\int_{A_2}\big(
		&(\beta pf+q\delta_0)^{*i}(y-x) \\
		&\cdot[c(\beta p)^2f(\tilde{y}_1-y)f(\tilde{y}_2-y) \\
		&\hspace{3mm}+\beta pq\left\{f(\tilde{y}_1-y)\delta_0(\tilde{y_2}-y) + \delta_0(\tilde{y}_1-y)f(\tilde{y_2}-y) \right\}] \\
		&\cdot(\beta pf+q\delta_0)^{*(m-1-i)}(y_1-\tilde{y}_1) \\
		&\cdot(\beta pf+q\delta_0)^{*(m-1-i)}(y_2-\tilde{y}_2)
		\big)
		\,\dd y_1\,\dd y_2\,\dd \tilde{y}_1\,\dd \tilde{y}_2\,\dd y\,\dd x.
	\end{aligned}
\end{equation}
In \eqref{e:dddd3}, $y$ corresponds to an $i$-th generation point in the family
	initiated by $x\in Z_{-m}$, $c(\beta p)^2 + 2\beta pq$ is the expected
	number of pairs of points $\tilde{y}_1$ and $\tilde{y}_2$ which are children of $y$, and $y_1$ and $y_2$ are the $(m-1-i)$-th generation offspring of $\tilde{y}_1$ and $\tilde{y}_2$, respectively. Using Fubini's theorem together with \eqref{e:int_finitecluster}, after straight forward calculations, \eqref{e:dddd3} reduces to
\begin{align*}
	&\rho_G\int_{A_1}\int_{A_2}\sum_{i=0}^{\infty}\left\{c(\beta p)^2f*\tilde{f} + \beta pq(f + \tilde{f})\right\}\\
	&\hspace{15mm}*\sum_{k_1=0}^{i}\sum_{k_2=0}^{i}\binom{i}{k_1}\binom{i}{k_2}q^{2i-k_1-k_2}(\beta p)^{k_1+k_2}f^{*k_1}*\tilde{f}^{*k_2}(y_1-y_2)\,\dd y_1\,\dd y_2
\end{align*}
Combining this result with \eqref{e:some_clever_name} we finally see that $G_0^{\mathrm{st}}$ has PCF $g_G$ as given
by \eqref{e:PCFequil}.

\section{Simulating the limiting process}\label{a:3}
This appendix presents an approximate simulation procedure for
simulating a special case of $G_0^{\textup{st}}$ on a bounded region
$R\subset\mathbb{R}^d$. It is available in \verb|R| through the
package \verb|icpp|, which can be obtained at
\url{https://github.com/adchSTATS/icpp}. The implementation utilizes
existing functions from the packages \verb|spatstat| and
\verb|RandomFields| to simulate the noise process.

For simplicity and specificity we make the following assumptions. Let the situation be as in
Theorem~\ref{t:main2}, but with $q = 0$ and let $f\sim N_d(\sigma^2)$ with
$\sigma>0$. Also, without loss of generality, assume no thinning
(i.e., $p = 1$).  Let
$R_{\oplus r}=\{\xi\in\mathbb R^d:b(\xi,r)\cap R\not=\emptyset\}$
where $b(\xi, r)$ is a closed ball with centre $\xi$ and radius
$r\ge0$. Denote $n$ the number of iterations in our approximate
simulation algorithm, that is, $-n$ is the starting time when ignoring
what happens previously. Note that $\sqrt{n}\sigma$ is the standard
deviation of the $n$th convolution power of $f$. To account for edge
effects, let $r=4\sqrt{n}\sigma$ where 4 is an arbitrary non-negative
value ensuring that a point of $G_{-n}^{\textup{st}}\setminus R_{\oplus r}$
would generate a $n$th generation offspring in $R$ with very low
probability, at most $1/15787$. In the approximate simulation
procedure, we ignore those points of $G_0^{\textup{st}}\cap R$ which are
generated by an $i$th generation ancestor $x$ when $i<-n$ or both
$-n\le i<0$ and $x\not\in R_{\oplus 4\sqrt{-i}\sigma}$.
This is our algorithm in
pseudocode 
where ``parallel-for'' means a parallel for loop:

\vspace{0.2cm}

%
%


\noindent\hrulefill

\noindent\textbf{parallel-for} $i = -n$ to 0 \textbf{do} \\
\indent simulate $Z'_{i} := Z_{i} \cap R_{\oplus 4\sqrt{-i}\sigma}$\\
\textbf{end parallel-for} \\
set $O := Z'_{-n}$ \\
\textbf{if} $n \ne 0$ \textbf{then} \\
\indent\textbf{for} $i = -(n-1)$ to 0 \textbf{do} \\
\indent\indent \textbf{parallel-for} $x\in O$ \textbf{do}\\
\indent\indent\indent simulate the $1$st generation offspring process, $O_x$, with parent $x$ \\
\indent\indent\textbf{end parallel-for} \\
\indent\indent set $O := Z'_{i}\bigcup \left(\bigcup_{x\in O} O_x\cap R_{\oplus 4\sqrt{-i}\sigma}\right)$ \\
\indent\textbf{end for} \\
\textbf{end if} \\
\textbf{return} $O$

\noindent\hrulefill

\vspace{0.2cm}


Note that $\rho_Z\sum_{i = 0}^{n}(\beta p)^i$ is the intensity of the
stationary point process obtained by ignoring those points of
$G_0^{\textup{st}}$ which are generated by an $i$th generation ancestor
with $i<-n$. We base the choice of $n$ on this fact by considering a
precision parameter $\varepsilon>0$ and letting
\begin{align*}
	n = \sup\biggl\{m\in\{1,2,\ldots\}:\norm[\Big]{\rho_Z\sum_{i = 0}^{m}(\beta
		p)^i - \rho_{G}}\le \varepsilon\biggr\}.
\end{align*}
To exemplify, let $\rho_G = 100$ and $\beta p = 0.8$ implying that
$\rho_Z = 20$, and let $\varepsilon = 2.22\times 10^{-16}$, then
$n = 159$. If instead $\beta p = 0.99$, then $n = 3609$.

\section*{Acknowledgements}
Supported by The Danish Council for Independent Research | Natural
Sciences, grant DFF -- 7014-00074 ``Statistics for point processes in
space and beyond'', and by the ``Centre for Stochastic Geometry and
Advanced Bioimaging'', funded by grant 8721 from the Villum
Foundation.  We thank Ina Trolle Andersen, Yongtao Guan, Ute Hahn, Henrike H{\"a}bel, Eva
B.~Vedel Jensen, and Morten Nielsen for helpful comments.

\bibliographystyle{plain}
\bibliography{references}
\end{document}